\theoremstyle{definition}
\newtheorem{theorem}{Theorem}
\newtheorem*{theorem*}{Theorem}
\newtheorem{definition}[theorem]{Definition}
\newtheorem*{definition*}{Definition}
\newtheorem{prop}[theorem]{Proposition}
\newtheorem{lemma}[theorem]{Lemma}
\newtheorem{example}[theorem]{Example}
\newtheorem{rem}[theorem]{Remark}
\newtheorem{notation}[theorem]{Notation}
\newtheorem{question}[theorem]{Question}
\numberwithin{theorem}{section}
\numberwithin{equation}{section}
\newcommand{\graphDb}[0]{
\begin{tikzpicture}[x=1pt,y=1pt,yscale=-0.5,xscale=0.5,baseline=-140pt, line width=1pt]

\draw [dash pattern={on 3pt off 2pt}] [-stealth]  (90,270)..controls (130,220)..(165,265) ;
\draw [dash pattern={on 3pt off 3pt}]  [-stealth] (250 ,270).. controls (290, 220).. (325,265) ;

\draw [-stealth] (90, 270)--(165,270) ;
\draw [-stealth](170, 270)--(245,270) ;

\draw  [fill={rgb, 255:red, 0; green, 0; blue, 0 }  ,fill opacity=1 ] (90,270) circle (5);
\draw (90, 290) node {$1$};
\draw  [fill={rgb, 255:red, 0; green, 0; blue, 0 }  ,fill opacity=1 ] (170,270) circle (5);
\draw (170, 290) node {$2$};
\draw [fill={rgb, 255:red, 0; green, 0; blue, 0 }  ,fill opacity=1 ] (250,270) circle (5);
\draw (250, 290) node {$3$};
\draw [fill={rgb, 255:red, 0; green, 0; blue,0 } ,fill opacity=1] (330,270) circle (5);
\draw (330, 290) node {$4$};

\end{tikzpicture}}
\newcommand{\graphD}[0]{
\begin{tikzpicture}[x=1pt,y=1pt,yscale=-0.5,xscale=0.5,baseline=-140pt, line width=1pt]

\draw [dash pattern={on 3pt off 2pt}] [-stealth]  (90,270)..controls (170,220)..(245,265) ;
\draw [dash pattern={on 3pt off 3pt}]  [-stealth] (170,270).. controls (250, 220).. (325,265) ;

\draw [-stealth] (90, 270)--(165,270) ;
\draw [-stealth](170, 270)--(245,270) ;

\draw  [fill={rgb, 255:red, 0; green, 0; blue, 0 }  ,fill opacity=1 ] (90,270) circle (5);
\draw (90, 290) node {$1$};
\draw  [fill={rgb, 255:red, 0; green, 0; blue, 0 }  ,fill opacity=1 ] (170,270) circle (5);
\draw (170, 290) node {$2$};
\draw [fill={rgb, 255:red, 0; green, 0; blue, 0 }  ,fill opacity=1 ] (250,270) circle (5);
\draw (250, 290) node {$3$};
\draw [fill={rgb, 255:red, 0; green, 0; blue,0 } ,fill opacity=1] (330,270) circle (5);
\draw (330, 290) node {$4$};

\end{tikzpicture}}
\newcommand{\graphE}[0]{
\begin{tikzpicture}[x=1pt,y=1pt,yscale=-0.5,xscale=0.5,baseline=-130pt, line width=1pt]

\draw [dash pattern={on 3pt off 2pt}] [-stealth]  (210,220)-- (245,265) ;
\draw [dash pattern={on 3pt off 3pt}]  [stealth-] (210,220)-- (325,265) ;
\draw [dash pattern={on 3pt off 3pt}]  [-stealth] (165, 270)--(210,220) ;


\draw [-stealth](170, 270)--(245,270) ;

\draw  [fill={rgb, 255:red, 0; green, 0; blue, 0 }  ,fill opacity=1 ] (170,270) circle (5);
\draw (170, 290) node {$1$};
\draw [fill={rgb, 255:red, 0; green, 0; blue, 0 }  ,fill opacity=1 ] (250,270) circle (5);
\draw (250, 290) node {$2$};
\draw [fill={rgb, 255:red, 0; green, 0; blue,0 } ,fill opacity=1] (330,270) circle (5);
\draw (330, 290) node {$3$};

\draw  (210,215) circle (5);
\draw (210, 200) node {$4$};

\end{tikzpicture}}
\newcommand{\graphF}[0]{
\begin{tikzpicture}[x=1pt,y=1pt,yscale=-0.5,xscale=0.5,baseline=-140pt, line width=1pt]

\draw [dash pattern={on 3pt off 2pt}] [-stealth]  (170,265)..controls (210,220)..(245,265) ;
\draw [dash pattern={on 3pt off 3pt}]  [-stealth] (170,275).. controls (210, 320).. (245,265) ;

\draw [dash pattern={on 3pt off 2pt}] [-stealth] (90, 270)--(165,270) ;
\draw [dash pattern={on 3pt off 2pt}] [-stealth](255, 270)--(325,270) ;

\draw  [fill={rgb, 255:red, 0; green, 0; blue, 0 }  ,fill opacity=1 ] (90,270) circle (5);
\draw (90, 290) node {$1$};
\draw  (170,270) circle (5);
\draw (170, 290) node {$2$};
\draw (250,270) circle (5);
\draw (250, 290) node {$3$};
\draw [fill={rgb, 255:red, 0; green, 0; blue,0 } ,fill opacity=1] (330,270) circle (5);
\draw (330, 290) node {$4$};

\end{tikzpicture}}
\begin{document} 
\title{Two graph homologies and the space of long embeddings}
\author{Leo Yoshioka\thanks{Graduate School of Mathematical Sciences, The University of Tokyo\newline\qquad e-mail:yoshioka@ms.u-tokyo.ac.jp}}
\date{October 2023}
\maketitle

\begin{abstract}
Graph homologies are powerful tools to compute the rational homotopy group of the space of long embeddings $\pi_{\ast}\text{Emb}(\mathbb{R}^j, \mathbb{R}^n)\otimes \mathbb{Q}$. 
Two graph homologies have been invented from two approaches to study the space of long embeddings: the hairy graph homology from embedding calculus, and BCR graph homology from configuration space integral. 
In this paper, we construct a monomorphism from the top hairy graph homology to the top BCR graph homology, though the latter graph homology is quite modified. This map and its left inverse are analogs of PBW isomorphism between $\mathcal{B}$ and $\mathcal{A}(S^1)$, the space of open Jacobi diagrams and the space of Jacobi diagrams on the unit circle, in the theory of Vassiliev knot invariants. 
\end{abstract}

\vskip\baselineskip

\tableofcontents

\section*{Introduction}
\addcontentsline{toc}{part}{Introduction}

A long embedding is an embedding of $\mathbb{R}^j$ into $\mathbb{R}^n$, which is standard outside a disk in $\mathbb{R}^j$. We write $\mathcal{K}_{n,j}$ for the space of long embeddings $\text{Emb}(\mathbb{R}^j, \mathbb{R}^n)$, and write $\overline{\mathcal{K}}_{n,j}$ for the homotopy fiber of the forgetful map $\mathcal{K}_{n,j} \rightarrow \text{Imm}(\mathbb{R}^j, \mathbb{R}^n)$ to the space of long immersions. The aim of this paper is to compare two combinatorial objects that are introduced to study $\overline{\mathcal{K}}_{n,j}$. 

In 2017, Fresse, Turchin and Willwacher \cite{FTW}, following Arone and Turchin \cite{AT1}\cite{AT2}, established a beautiful framework to compute the rational homotopy group $\pi_{\ast} \overline{\mathcal{K}}_{n,j} \otimes \mathbb{Q} $ in terms of combinatorial object called graph homology. 
More precisely, they showed the isomorphism between $\pi_{\ast} (\overline{\mathcal{K}}_{n,j})\otimes \mathbb{Q}$ and the graph homology $H_{\ast}(HGC^{n,j} )$, called hairy graph homology. This significant result is based on the theory of embedding calculus established by Goodwillie, Klein and Weiss \cite{GW}\cite{GKW}\cite{Wei} and also the theory of little disk operad. 

However, this deep framework is fully successful, so far, only for the range $n-j\geq3$.  Moreover, it seems difficult to know geometric generators of $\pi_{\ast} (\overline{\mathcal{K}}_{n,j}, \mathbb{Q})$  from generators of $H_{\ast}(HGC)$.
 
On the other hand, from 90's to 10's, Bott\cite{Bot}, Cattaneo, Rossi\cite{CR}, Sakai\cite{Sak} and Watanabe\cite{Wat} developed a framework to construct geometric  (co)cycles of  $\overline{\mathcal{K}}_{n,j}$ from another graph (co)homology. We call their graph homology BCR graph homology and write it as $H_{\ast}(GC^{n,j})$. To get the cocycles of $\overline{\mathcal{K}}_{n,j}$, they used the operation called configuration space integral. Fortunately, this framework is applicable to $n-j=2$. 

Surprisingly, BCR graph homology $H_{\ast}(GC)$ has very similar graphs to those of $H_{\ast}(HGC)$. However, although embedding calculus allows hairy graphs with any number of loops, configuration space integral was successful only for BCR graphs with zero or one loop. This caused much less information for $\pi_{\ast} (\overline{\mathcal{K}}_{n,j}, \mathbb{Q})$ or $H_{\ast} (\overline{\mathcal{K}}_{n,j}, \mathbb{Q})$ given by configuration space integral. 

In the author's previous paper \cite{Yos}, we have succeeded in conducting configuration space integral for some 2-loop BCR graphs, and in giving a new non-trivial  (co)cycle of the space of long embeddings (including $n-j =2$). However,  we haven't been able to give more examples, due to much more complicated combinatorics of $H_{\ast}(GC)$ than $H_{\ast}(HGC)$. 

 In this paper, we relate the two graph homologies $H_{\ast} (GC^{n,j})$ and $H_{\ast}(HGC^{n,j})$ at the top (non-degenerate) part.  We write the top part of $H_{\ast} (HGC)$ as $\mathcal{B}$ and write the top part of $H_{\ast} (GC)$ as $\mathcal{A}_{BCR}$, though $\mathcal{A}_{BCR}$ is quite modified as explained later. We focus on the case $n-j$ is even because the author is most interested in the case $n-j = 2$. 
 The following is our main result. 
 
 \begin{theorem}
 \label{main result 1 introduction}
 There is a monomorphism $\chi_{\ast}$ from $\mathcal{B}$ to $\mathcal{A}_{BCR}$. 
 \end{theorem}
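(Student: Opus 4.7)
The plan is to construct $\chi_{\ast}$ directly as a chain-level map modeled on the classical PBW symmetrization from open Jacobi diagrams to Jacobi diagrams on a circle, and then to build an explicit left inverse $\eta$. Given a hairy graph $\Gamma$ representing a class in $\mathcal{B}$ with $k$ univalent vertices (hairs), I would define $\chi(\Gamma)$ as the signed sum, over all $k!$ total orderings of the hairs, of the BCR graph obtained by attaching those hairs in the chosen order along the oriented long line $\mathbb{R}^j$. Signs are dictated by the orientation conventions of $\mathcal{A}_{BCR}$, and one may or may not normalize by $1/k!$ depending on these conventions. This is the natural candidate, since in the Vassiliev setting the corresponding symmetrization is precisely the PBW map $\chi \colon \mathcal{B} \to \mathcal{A}(S^1)$, and the top parts are set up here to mirror that picture.

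The next step is to verify that $\chi$ descends to a well-defined chain map. The defining relations of $\mathcal{B}$ (antisymmetry, and the IHX relation on internal trivalent vertices) must land in the relations of $\mathcal{A}_{BCR}$; the IHX relation is the delicate one, since in the on-the-line setting it corresponds to an averaged STU-type identity, and I would need to check that summing the STU moves over all insertion points of hairs reproduces precisely the image of IHX under $\chi$. Simultaneously, I must match differentials: the internal edge-contraction and configuration-space boundary pieces of the BCR differential on $\mathcal{A}_{BCR}$ have to correspond term-by-term, after symmetrization, to the hairy differential on $\mathcal{B}$. This is the point at which the ``quite modified'' structure of $\mathcal{A}_{BCR}$ announced in the introduction should be used; the modifications are presumably tuned exactly so that $\chi$ becomes a chain map.

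To upgrade $\chi_{\ast}$ from a chain map to a monomorphism on homology, I would construct a chain-level left inverse $\eta$, again by analogy with the knot-invariant case where one cuts the circle at a chosen point and forgets the linear ordering of the univalent vertices. Concretely, $\eta$ takes a BCR graph, detaches its univalent vertices from the line, and returns the underlying hairy graph (with a normalizing factor so that $\eta \circ \chi = \mathrm{id}_{\mathcal{B}}$ on the nose at the chain level). Once this identity is established, injectivity of $\chi_{\ast}$ on homology follows formally.

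The main obstacle is essentially contained in the second paragraph: checking that $\eta$ is also a chain map on the modified $\mathcal{A}_{BCR}$, and that the symmetrization really commutes with the BCR differential, including the boundary contributions coming from collisions of points on the line (``hidden face'' analogues). The extra edge data, directedness, and orientation conventions that BCR graphs carry but hairy graphs do not, make this matching subtle, and I expect much of the technical work will consist of tracking precisely how the modifications to $\mathcal{A}_{BCR}$ eliminate what would otherwise be obstructing terms. Once that compatibility is in hand, the PBW-style left inverse argument closes the proof.
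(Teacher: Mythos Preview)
Your proposal rests on two misreadings of the paper's setup, and the left inverse you suggest would not be well-defined.

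First, $\mathcal{B}$ and $\mathcal{A}_{BCR}$ are not chain complexes here: they are the \emph{top} parts of the respective graph homologies, i.e.\ quotient vector spaces (hairy graphs modulo IHX and orientation; BCR graphs modulo IHX, STU and orientation). There is no differential to commute with, so the entire discussion of ``chain maps'', ``hidden faces'' and ``boundary contributions'' is off target.

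Second, the map $\chi$ in the paper is \emph{not} the PBW symmetrization. A hairy graph already \emph{is} a BCR graph (one whose external vertices carry no solid edges, solid type $(1,\dots,1)$), and $\chi$ is literally the inclusion $\mathcal{C}\hookrightarrow\mathcal{D}$. There is no single ``long line'' to attach hairs to; BCR graphs allow arbitrarily many solid broken-line components.

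Most importantly, your left inverse $\eta$ (``detach the univalent vertices from the line and return the underlying hairy graph'') does not descend modulo STU. In an STU relation $S_iD = D - U_iD$, the graphs $D$ and $U_iD$ differ only by swapping two adjacent external vertices on a solid segment; after forgetting solid edges they yield the \emph{same} hairy graph (up to sign), so $\eta(D-U_iD)$ is either $0$ or $\pm 2$ times a hairy graph, while $\eta(S_iD)$ is a hairy graph with one more internal vertex. These are not equal in $\mathcal{B}$.

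The paper's actual left inverse $\sigma$ is much more delicate and is the heart of the proof: it is built inductively on the number $k$ of external vertices by
\[
\sigma_k(D)=\frac{1}{|\mathfrak{S}_k(D)|}\sum_{\pi\in\mathfrak{S}_k(D)}\sigma_{k-1}(\Gamma_D(\pi)),
\]
where $\Gamma_D(\pi)\in\mathcal{D}_{k-1}$ is produced by writing $\pi$ as a word in adjacent transpositions and repeatedly applying STU to reduce the number of external vertices. The substantive work is showing this is independent of the word chosen for $\pi$ (one checks the relations $U_i^2=\mathrm{id}$, far commutativity, and the braid relation, the last of which uses IHX), and then verifying $(\Sigma1)$--$(\Sigma4)$. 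This is the genuine Bar-Natan argument; your $\eta$ skips it entirely.
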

 
 The proof of our main result is very similar to the proof by Bar-Natan\cite{Bar} of PBW (Poincar\'{e}--Birkhoff--Witt) isomorphisms between $\mathcal{B}$, the space of open Jacobi diagrams, and $\mathcal{A}(S_1)$, the space of Jacobi diagrams on the unit circle. As Bar-Natan showed another isomorphism between $\mathcal{A}(S_1)$ and the space of chord diagrams $\mathcal{A}^c(S_1)$, we have the following.  Let $\mathcal{A}^c_{BCR}$ be  the space of BCR chord diagrams defined in Definition \ref{defBCRchorddiagrams}.
 
 \begin{theorem}
 \label{main result 2introduction}
 There is an isomorphism $\iota_{\ast}$ from $\mathcal{A}^c_{BCR}$ to $\mathcal{A}_{BCR}$. 
 \end{theorem}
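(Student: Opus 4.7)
The proof plan follows Bar-Natan's strategy \cite{Bar} for the classical isomorphism $\mathcal{A}^c(S^1) \cong \mathcal{A}(S^1)$, adapted to the BCR setting: define $\iota_*$ as the natural inclusion of chord diagrams into the full space of diagrams with internal trivalent vertices, and construct an explicit inverse by resolving those internal vertices into chords.

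First, I would define $\iota_*: \mathcal{A}^c_{BCR} \to \mathcal{A}_{BCR}$ as the map sending a BCR chord diagram to itself, viewed as a BCR graph without internal trivalent vertices. The verification of well-definedness requires identifying the chord-diagram relations on $\mathcal{A}^c_{BCR}$ (the BCR analog of the $4T$ relation) as consequences of the graph-level relations on $\mathcal{A}_{BCR}$, most notably the STU-type and IHX-type relations available at the top part.

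For the inverse, I would construct $\rho_*: \mathcal{A}_{BCR} \to \mathcal{A}^c_{BCR}$ by induction on the number of internal trivalent vertices. Using the STU-type relation, every internal vertex that is adjacent (through a dashed edge) to the skeleton can be eliminated by replacing the local configuration with a difference of two graphs each having one fewer internal vertex. Iterating this step terminates, since at the top part every internal vertex is eventually reachable from the skeleton, and the result is a linear combination of BCR chord diagrams.

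The main obstacle, exactly as in Bar-Natan's original argument, is showing that $\rho_*$ is well-defined: the output must be independent of both the order in which the internal vertices are resolved and the choice of which incident edge is used at each step. This reduces to finitely many local compatibility checks in small neighborhoods of two interacting internal vertices, all of which follow from the IHX-type relation together with antisymmetry; this is where the bulk of the work lies. Once well-definedness of $\rho_*$ is in hand, the identities $\rho_* \circ \iota_* = \mathrm{id}$ and $\iota_* \circ \rho_* = \mathrm{id}$ are routine: the former is immediate since $\rho_*$ acts trivially on diagrams with no internal vertices, and the latter follows directly from the inductive construction of $\rho_*$.
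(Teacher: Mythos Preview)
Your proposal is correct and follows essentially the same route as the paper: define $\iota_*$ as the inclusion, build the inverse by resolving internal vertices via STU into a combination of chord diagrams, and prove independence of the resolution by induction on the number of internal vertices, exactly in the spirit of Bar-Natan's argument. One small sharpening: the local compatibility checks live in the \emph{target} $\mathcal{A}^c_{BCR}$, so what makes them vanish is the $4T$ relation there (not IHX/antisymmetry, which involve internal vertices and hence are not available after resolution); the paper also isolates a ``sliding'' lemma---a non-local $4T$ argument---to handle the exceptional case where the internal vertex being resolved is attached only to a block of further internal vertices.
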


 It might be possible to apply our main result to give new nontrivial (co)cycles of  $\overline{\mathcal{K}}_{n,j}$ by the following steps: First, compute some part (two-loop part, for example) of $\mathcal{B}$ by relating it with (anti-)symmetric polynomials as in \cite{CCTW}. Next,  by Theorem \ref{main result 1 introduction}, we obtain some nontrivial subspace of  $\mathcal{A}_{BCR}$. This gives some non-trivial graph cocycle $H = \sum_i a_i \Gamma_i $. (See the next paragraph.) So we can perform configuration space integral
  \[
  \overline{I}(H) = \sum_i a_i \int_{\text{Conf}} \overline{\Omega} (\Gamma_i) 
 \]
associated with $H$ (with the correction term $\overline{c}$ in \cite{Yos}). Finally, conduct pairing argument in \cite{Yos} between the cocycle $\overline{I}(H)$ and a suitably-constructed cycle of $\overline{\mathcal{K}}_{n,j}$. Then  $\overline{I}(H)$ might detect the non-triviality of this cycle.

However, it should be emphasized that the (top) graph homology $\mathcal{A}_{BCR}$ is modified in this paper: $\mathcal{A}_{BCR}$ has no relation corresponding to contractions of chords. (See Definition \ref{defchordrelation}.) Hence, a graph cocycle $H$ constructed 
through a weight system $w$ on $\mathcal{A}_{BCR}$
\[
H = \sum_{\Gamma} \frac{w(\Gamma)}{\text{Aut}(\Gamma)} \Gamma
\]
may not be a genuine graph cocycle used in \cite{Yos}, because contractions of chords may not be canceled. One possible solution will be adding correction terms using the data of paths of immersions that $\overline{\mathcal{K}}_{n,j}$ has. We deal with the correction terms in the author's future work in preparation \cite{Yos2}.

The second result holds even if we impose relations on chords to the spaces $\mathcal{A}^c_{BCR}$ to $\mathcal{A}_{BCR}$. Let $\overline{\mathcal{A}}^c_{BCR}$ to $\overline{\mathcal{A}}_{BCR}$ be the quotient space.

\begin{theorem}
 \label{main result 3introduction}
 There is an isomorphism $\overline{\iota}$ from $\overline{\mathcal{A}}^c_{BCR}$ to $\overline{\mathcal{A}}_{BCR}$. 
 \end{theorem}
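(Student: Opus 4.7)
The plan is to show that the isomorphism $\iota_{\ast}$ of Theorem \ref{main result 2introduction} is compatible with the chord relations used to form the quotient spaces, and therefore descends to the desired isomorphism $\overline{\iota}$ on the quotients. First I would recall the explicit formula for $\iota_{\ast}: \mathcal{A}^c_{BCR} \to \mathcal{A}_{BCR}$ and for its inverse constructed in the proof of Theorem \ref{main result 2introduction}, paying particular attention to how each map acts in a small neighborhood of a chord. Since a BCR chord diagram locally looks like an edge between two points on the ambient graph, and the image of such a configuration under $\iota_{\ast}$ should again be a local combination involving the same chord endpoints, the question reduces to a local calculation.

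The key step is a two-sided check of compatibility: I would verify that every generator of the chord-relation ideal in $\mathcal{A}^c_{BCR}$ (from Definition \ref{defchordrelation}) is sent by $\iota_{\ast}$ into the chord-relation ideal of $\mathcal{A}_{BCR}$, and symmetrically that the inverse map respects these ideals. Concretely, I would pick a single chord relation in $\mathcal{A}^c_{BCR}$, expand $\iota_{\ast}$ term by term, and match the resulting linear combination with a corresponding chord relation in $\mathcal{A}_{BCR}$ (possibly modulo other relations that are already imposed in $\mathcal{A}_{BCR}$, such as STU-type or IHX-type relations inherited from the BCR framework). Once both directions are verified, the induced map
\[
\overline{\iota}: \overline{\mathcal{A}}^c_{BCR} \longrightarrow \overline{\mathcal{A}}_{BCR}
\]
is well-defined with a well-defined inverse, hence an isomorphism.

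The main obstacle is bookkeeping in this local verification: the map $\iota_{\ast}$ typically expands a chord into a sum of diagrams in which the two endpoints of the chord are resolved into trivalent vertices attached to the underlying graph, and matching the contraction-of-chord relation with a relation on this expanded side requires care to ensure that cross-terms either cancel or already vanish modulo the standing relations. If some of these cross-terms turn out not to be zero on the nose but only modulo the ambient relations of $\mathcal{A}_{BCR}$, one must ensure that this ambient cancellation survives passing to $\overline{\mathcal{A}}_{BCR}$. Provided this compatibility is checked cleanly, bijectivity of $\overline{\iota}$ follows directly from bijectivity of $\iota_{\ast}$ by the standard fact that an isomorphism sending an ideal into an ideal (in both directions) induces an isomorphism on the corresponding quotients.
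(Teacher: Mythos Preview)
Your high-level strategy---show that $\iota_{\ast}$ and its inverse both preserve the chord-relation ideals, hence descend to mutually inverse maps on the quotients---is exactly what the paper does. However, your description of the maps is confused, and this confusion hides where the actual work lies.

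The map $\iota$ is simply the \emph{inclusion} $\mathcal{D}^c \hookrightarrow \mathcal{D}$ (Proposition~\ref{Main Result 02}): a BCR chord diagram is in particular a BCR graph with no internal vertices, and $\iota$ sends it to itself. There is no ``expansion of a chord into a sum of diagrams in which the two endpoints are resolved into trivalent vertices''. Consequently, the forward compatibility check is immediate: a chord relation among BCR chord diagrams is already, verbatim, a chord relation in $\mathcal{D}$, so it vanishes in $\overline{\mathcal{A}}_{BCR}$. No local computation or cancellation of cross-terms is needed here.

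All of the content is in the reverse direction, for the inverse $\kappa$ built in the proof of Theorem~\ref{Main Result 2}: $\kappa$ takes a BCR graph and resolves its internal vertices by repeated STU moves until only chord diagrams remain. The point you must check is that this resolution sends a chord relation $\sum_i D_i$ in $\mathcal{D}$ to a combination of chord relations in $\mathcal{D}^c$. The paper's argument is to apply the \emph{same} STU step in parallel to the corresponding edge of each $D_i$; because the chord relation is local and disjoint from the edge being resolved, the outputs $\sum_i D_{it}$ and $\sum_i D_{iu}$ are again chord relations, and one iterates. Your proposal mentions the inverse only in passing and misattributes the bookkeeping difficulty to $\iota_{\ast}$; you should redirect the effort to this parallel-resolution argument for $\kappa$.
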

 
At present, the author does not know there is a monomorphism from $\mathcal{B}$ to the genuine graph homology $\overline{\mathcal{A}}_{BCR}$. But it holds for all the cases the author can compute $\overline{\mathcal{A}}_{BCR}$, including the simplest 2-loop case in \cite{Yos}. Note that the map is not necessarily surjective even in the case the target is the quotient space $\overline{\mathcal{A}}_{BCR}$.

\begin{question}
Is the map $\overline{\chi}_{\ast}$ from $\mathcal{B}$ to $\overline{\mathcal{A}}_{BCR}$ injective?
\end{question} 

As a final remark, we go back to Arone and Turchin's paper \cite{AT2}. In \cite{AT2}, the graph homology $H_{\ast}(GC)$, which is written there as $H_{\ast}(\mathcal{E}^{j,n}_{\pi})$, is constructed from the homology of a derived mapping space between $\Omega$ modules:
\[
H_{\ast}\left(\underset{\Omega}{\text{Rmod}}^h (\widetilde{H}_{\ast}(S^{m\bullet}), \overline{H}_{\ast}(C_{\bullet}(\mathbb{R}^n))\right)
\]
by an injective resolution of the target. In \cite{AT2}, another graph homology $H_{\ast}(\mathcal{K}^{j,n}_{\pi})$ is introduced by a projective resolution of the source. By taking both projective and injective resolutions, one obtains one more graph homology $H_{\ast}(\mathcal{Y}^{j,n}_{\pi})$, which does not explicitly appear in \cite{AT2}. Since the origin is the same, the three graph homologies are isomorphic. It would be interesting to see directly the differences between $H_{\ast}(\mathcal{Y}^{j,n}_{\pi})$ and $H_{\ast}(GC)$; the duals of $H_{\ast}(GC)$ and $H_{\ast}(\mathcal{Y}^{j,n}_\pi)$ have almost the same graphs. A similar question arises for differences between $\mathcal{A}^c_{BCR}$ and the top part of $H_{\ast}(\mathcal{K}^{j,n}_\pi)$. See Remark \ref{remarkfromEC2}.
 
This paper is organized as follows. In Section \ref{Definition of graph homologies}, we define three spaces of graphs: $\mathcal{B}$, $\mathcal{A}_{BCR}$ and $\mathcal{A}^c_{BCR}$. Our main result is stated in Section \ref{Main Result}. After introducing several notations for proof in Section \ref{Preliminary for proof of Main Result}, we show our main result in Section \ref{Proof of Main Result}. 

\section*{Acknowledgement}
The author is deeply grateful to Victor Turchin for explaining to the author a lot about their work on graph homology. 
The author would like to thank Tadayuki Watanabe for motivating and inspiring discussion with the author. 
This paper was written during the author's three-month stay at Kansas State University. The author appreciates their hospitality. This research was supported by Forefront Physics and Mathematics Program to Drive Transformation (FoPM), a World-leading Innovative Graduate Study (WINGS) Program, The University of Tokyo.

\section{Definition of graph homologies}
\label{Definition of graph homologies}

\subsection{The modified BCR graph homology $\mathcal{A}_{BCR}$}
\label{defBCRgraphhomology}
Here, we define the spaces $\mathcal{A}_{BCR}$ and $\overline{\mathcal{A}}_{BCR}$ of BCR graphs. This space is almost equivalent to the dual of the top graph cohomology $H^0(GC)$ defined in \cite{Yos} as $H^0(\mathcal{D})$. 
However, we add an extra condition (the last condition in the following definition)  to the graphs. Moreover, we exclude some relations on chords. 
\begin{definition}[BCR graphs]
A BCR graph is a connected graph that satisfies the following. 
\begin{itemize}
\item There are (at most) two types of vertices: external vertices drawn in black and internal vertices drawn in white. The graph must have at least one external vertex. 
\item There are (at most) two types of edges: dashed edges and solid edges.
\item Each internal vertex has exactly three dashed edges and no solid edge. 
\item Each external vertex has exactly one dashed edge and at most two solid edges. 

\item The restriction to solid edges of the graph forms a disjoint sum of broken lines such as
\begin{tikzpicture}[x=0.5pt,y=0.5pt,yscale=0.5,xscale=0.5, baseline=0pt, line width = 1pt] 
\draw  [dash pattern = on 3pt off 2pt] (0, 0)-- (0,50);
\draw  [dash pattern = on 3pt off 2pt] (50, 0)-- (50,50);
\draw  [dash pattern = on 3pt off 2pt] (100, 0)-- (100,50) ;
\draw [dash pattern = on 3pt off 2pt]  (150, 0)--(150,50);
\draw  [dash pattern = on 3pt off 2pt] (200, 0)--(200,50);
\draw [dash pattern = on 3pt off 2pt]  (250, 0)--(250,50);

\draw  (0, 0)-- (100,0);
\draw  (150, 0)-- (200,0);

\draw [fill = black] (0, 0) circle (7);
\draw [fill = black] (50, 0) circle (7);
\draw [fill = black]  (100, 0) circle (7);

\draw [fill = black] (150, 0) circle (7);
\draw [fill = black](200, 0) circle (7);

\draw [fill = black]  (250, 0) circle (7);
\end{tikzpicture}.
In particular, neither double solid edges
\begin{tikzpicture}[x=1pt,y=1pt,yscale=1,xscale=1, baseline=-1pt, line width =1pt]
\draw  [fill={rgb, 255:red, 0; green, 0; blue, 0 }  ,fill opacity=1 ] (-1,0) circle (1) ;
\draw  [fill={rgb, 255:red, 0; green, 0; blue, 0 }  ,fill opacity=1 ] (50,0) circle (1) ;
\draw  (0,0) .. controls (20,5) and (30,5)  .. (50,0);
\draw (0,0) .. controls (20,-5) and (30,-5)  .. (50,0);
\end{tikzpicture}
nor small solid loops
\begin{tikzpicture}[x=1pt,y=1pt,yscale=1,xscale=1, baseline=1pt, line width = 1pt]
\draw (0,0) .. controls (-20,20) and (20,20)  .. (0,0);
\draw [fill = black] (0,-1) circle (1.5);
\end{tikzpicture}
are allowed.
\end{itemize}
\end{definition}

\begin{rem}
In Definition \ref{defBCRgraphhomology}, small dashed loops
\begin{tikzpicture}[x=1pt,y=1pt,yscale=1,xscale=1, baseline=1pt, line width = 1pt]
\draw [dash pattern = on 3pt off 2pt] (0,0) .. controls (-20,20) and (20,20)  .. (0,0);
\draw(0,-1) circle (1.5);
\end{tikzpicture},
double dashed edges
\begin{tikzpicture}[x=1pt,y=1pt,yscale=1,xscale=1, baseline=-1pt, line width =1pt]
\draw  (-1,0) circle (1) ;
\draw  (52,0) circle (1) ;
\draw  [dash pattern = on 3pt off 2pt] (0,0) .. controls (20,5) and (30,5)  .. (50,0);
\draw [dash pattern = on 3pt off 2pt] (0,0) .. controls (20,-5) and (30,-5)  .. (50,0);
\end{tikzpicture}
and multiple edges
\begin{tikzpicture}[x=1pt,y=1pt,yscale=1,xscale=1, baseline=-1pt, line width =1pt]
\draw  [fill = black]  (-1,0) circle (1) ;
\draw  [fill = black] (50,0) circle (1) ;
\draw  [dash pattern = on 3pt off 2pt] (0,0) .. controls (20,5) and (30,5)  .. (50,0);
\draw (0,0) .. controls (20,-5) and (30,-5)  .. (50,0);
\end{tikzpicture}
are allowed. However, some of them vanish by relations we later define.
\end{rem}

\begin{definition}
\label{defcolororientation}
We define two ways of coloring  BCR graphs: (I) odd case and (II) even case.
\begin{itemize}
\item [(I)] We orient each edge and label the vertices from $1$ to $n$ if the number of vertices is $n$.
\item [(II)]  We number the edges from $(1)$ to $(m)$ if the number of edges is $m$. 
\end{itemize}
We say two colorings of the isomorphic underlying graph are the same (resp. opposite) orientation if they are transferred from each other by even (resp. odd) permutation. 
\end{definition}

\begin{definition}
We write $\mathcal{D}$ for the vector space over $\mathbb{Q}$ generated by colored BCR graphs. 
 \end{definition}

\begin{definition}
We write $\mathcal{A}_{BCR}$ for the quotient vector space of $\mathcal{D}$ by IHX, STU, and orientation relations. We define STU and IHX relations below.
\end{definition}

\begin{definition}
We define IHX relations of colored BCR graphs as in Figure \ref{IHXrelation} below. In Figure \ref{IHXrelation}, graphs $D_1, D_2, D_3$ are allowed to have any colors so that contractions of the middle edges give three isomorphic graphs with the same orientation  (after transposition to the left-hand side). See \cite {Yos} for the sign rule for contraction of edges.
\begin{figure} [htpb]
\begin{center}
\begin{tikzpicture}[x=0.75pt,y=0.75pt,yscale=0.4,xscale=0.4, baseline=30pt, line width = 1pt] 
\draw [dash pattern={on 5pt off 4pt}]  (95,155)--(0,200); 
\draw [dash pattern={on 5pt off 4pt}]  (105,155)--(200,200); 
\draw [dash pattern={on 5pt off 4pt}]  (100,55)--(100,145); 
\draw (135, 100) node {};
\draw [dash pattern={on 5pt off 4pt}]  (95,45)--(0,0); 
\draw [dash pattern={on 5pt off 4pt}]  (105,45)--(200,0); 
\draw [color={rgb, 255:red, 0; green, 0; blue, 0 }  ,draw opacity=1 ] (100, 50) circle (7);
\draw [color={rgb, 255:red, 0; green, 0; blue, 0 }  ,draw opacity=1 ] (100, 150) circle (7);
\draw (100, -50) node {$D_1$};
\end{tikzpicture}
=
\begin{tikzpicture}[x=0.75pt,y=0.75pt,yscale=0.4,xscale=0.4, baseline=30pt, line width = 1pt] 
\draw [dash pattern={on 5pt off 4pt}]  (45,105)--(0,200);
\draw [dash pattern={on 5pt off 4pt}]  (155,105)--(200,200); 
\draw [dash pattern={on 5pt off 4pt}]  (55,100)--(145,100); 
\draw (100, 130) node {};
\draw [dash pattern={on 5pt off 4pt}]  (45,95)--(0,0); 
\draw [dash pattern={on 5pt off 4pt}]  (155,95)--(200,0); 
\draw [color={rgb, 255:red, 0; green, 0; blue, 0 }  ,draw opacity=1 ] (50, 100) circle (7);
\draw [color={rgb, 255:red, 0; green, 0; blue, 0 }  ,draw opacity=1 ] (150, 100) circle (7);
\draw (100, -50) node {$D_2$};
\end{tikzpicture}
-
\begin{tikzpicture}[x=0.75pt,y=0.75pt,yscale=0.4,xscale=0.4, baseline=30pt, line width = 1pt] 
\draw [dash pattern={on 5pt off 4pt}]  (145,105)--(0,200);
\draw [dash pattern={on 5pt off 4pt}]  (55,105)--(200,200); 
\draw [dash pattern={on 5pt off 4pt}]  (55,100)--(145,100); 
\draw (100, 70) node {};
\draw [dash pattern={on 5pt off 4pt}]  (45,95)--(0,0); 
\draw [dash pattern={on 5pt off 4pt}]  (155,95)--(200,0); 
\draw [color={rgb, 255:red, 0; green, 0; blue, 0 }  ,draw opacity=1 ] (50, 100) circle (7);
\draw [color={rgb, 255:red, 0; green, 0; blue, 0 }  ,draw opacity=1 ] (150, 100) circle (7);
\draw (100, -50) node {$D_3$};
\end{tikzpicture}.

\end{center}
\caption{IHX relation}
\label{IHXrelation}
\end{figure}
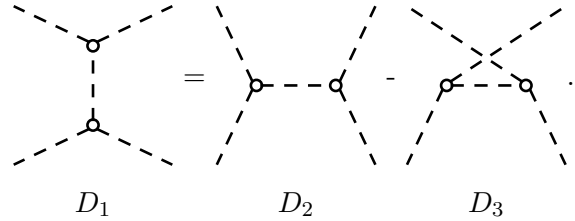
\end{definition}

\begin{rem}
As a consequence of an IHX relation, a small dashed loop at an internal vertex that is connected to another internal vertex vanishes: 
\[
\begin{tikzpicture}[x=1pt,y=1pt,yscale=2,xscale=2, baseline=1pt, line width = 1pt]
\draw [dash pattern = on 5pt off 4pt] (0,0) .. controls (-20,20) and (20,20)  .. (0,0);
\draw (0,-1) circle (1.5);
\draw (0,-10) circle (1.5);
\draw [dash pattern = on 5pt off 4pt] (0,-2.5)-- (0,-8.5);
\end{tikzpicture}=0.
\]
\end{rem}

\begin{definition}
We define STU relations (I)(II) of colored BCR graphs as in Figure \ref{STUrelation} below. In Figure \ref{STUrelation}, graphs $D_i$ are allowed to have any colors so that contractions of the middle edges (circled in red) give isomorphic graphs with the same orientation. In the case (I), the bottom vertex of the graph $D_1$ has one or two solid edges. In the case (II), the bottom vertex of the graph $D_1$ has no solid edge.

\begin{figure}[htpb]
\begin{center}
\includegraphics[width = 11cm]{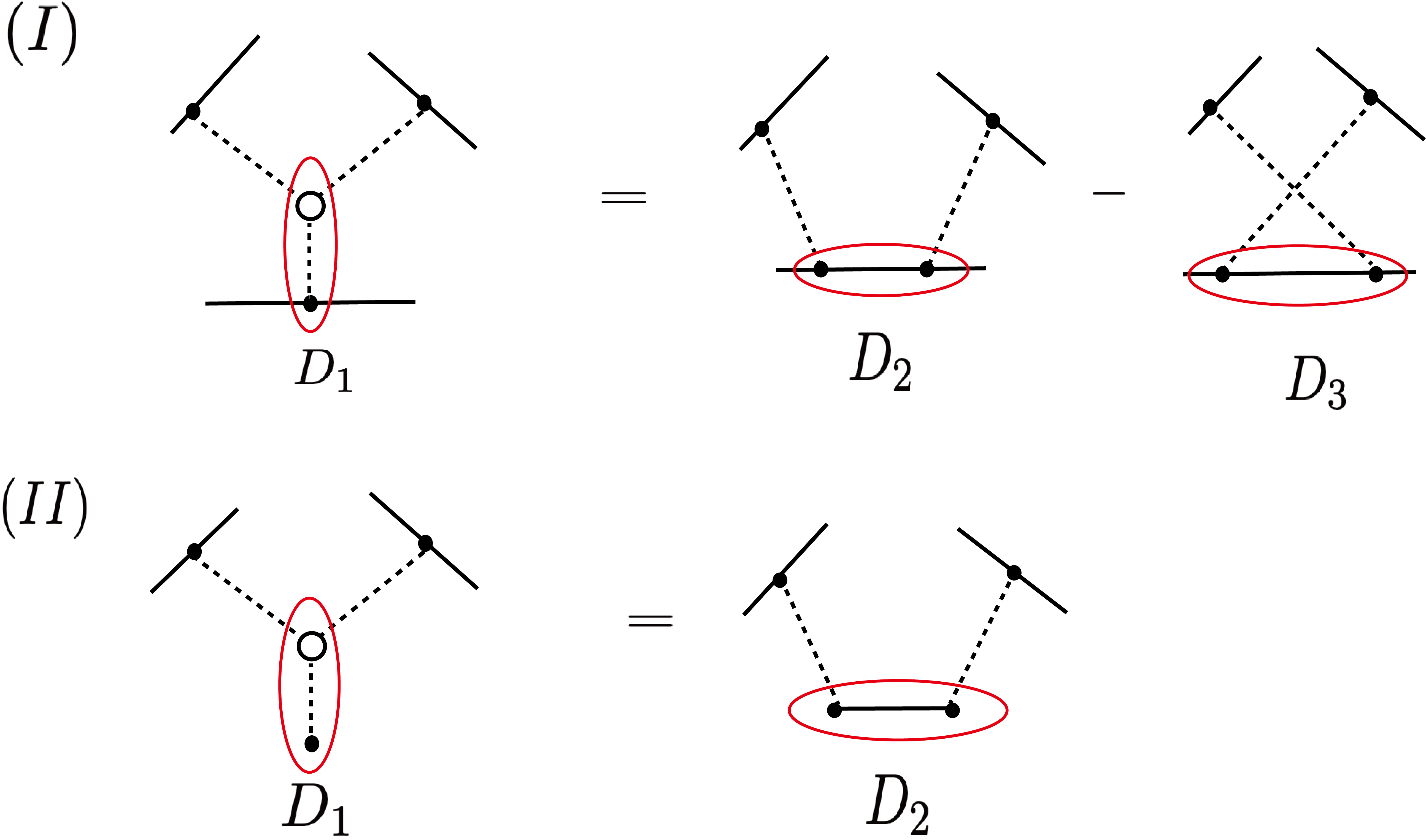}
\caption{STU relations (I)(II)}
\label{STUrelation}
\end{center}
\end{figure}

\end{definition}

\begin{rem}
\label{smallloop2}
As a consequence of an STU relation in the odd case (Definition \ref{defcolororientation}), a small dashed loop at an internal vertex that is connected to an external vertex vanishes: 
\[
\begin{tikzpicture}[x=1pt,y=1pt,yscale=2,xscale=2, baseline=1pt, line width = 1pt]
\draw [dash pattern = on 5pt off 4pt] (0,0) .. controls (-20,20) and (20,20)  .. (0,0);
\draw (0,-1) circle (1.5);
\draw [fill= black](0,-10) circle (1.5);
\draw [dash pattern = on 5pt off 4pt] (0,-2.5)-- (0,-8.5);
\end{tikzpicture}=0\quad (\text{odd case}).
\]
\end{rem}

\begin{rem}
As a consequence of an orientation relation in the even case, a double dashed edge vanishes. 

\[
\begin{tikzpicture}[x=1pt,y=1pt,yscale=2,xscale=2, baseline=-1pt, line width = 1pt ]
\draw  (-1,0) circle (1.5) ;
\draw  (52,0) circle (1.5) ;
\draw  [dash pattern = on 5pt off 4pt] (0,0) .. controls (20,5) and (30,5)  .. (50,0);
\draw  [dash pattern = on 5pt off 4pt] (0,0) .. controls (20,-5) and (30,-5)  .. (50,0);
\end{tikzpicture}\ =0 \quad  (\text{even case}).
\]
\end{rem}

\begin{definition}
We write $\overline{\mathcal{A}}_{BCR}$ for the quotient vector space of $\mathcal{D}$ by IHX, STU, orientation relations and chord relations. We define chord relations below.
\end{definition}

\begin{definition}
\label{defchordrelation}
We define chord relations (I)(II) of colored BCR graphs as in Figure \ref{chordrelation} below. In Figure \ref{chordrelation}, graphs $D_i$ are allowed to have any colors so that contractions of the middle edges give isomorphic graphs with the same orientation. 
Here, We only use chord relations in which all the graphs $D_i$ are modified BCR graphs. Namely, if the contracted graph has a loop consisting of only solid edges and external vertices, we do not consider the relation as a chord relation. In the case (I), the bottom vertex of the graph $D_1$ has one or two solid edges. In the case (II), the bottom vertex of the graph $D_1$ has no solid edge.

\begin{figure}[htpb]
\begin{center}
\includegraphics[width = 10cm]{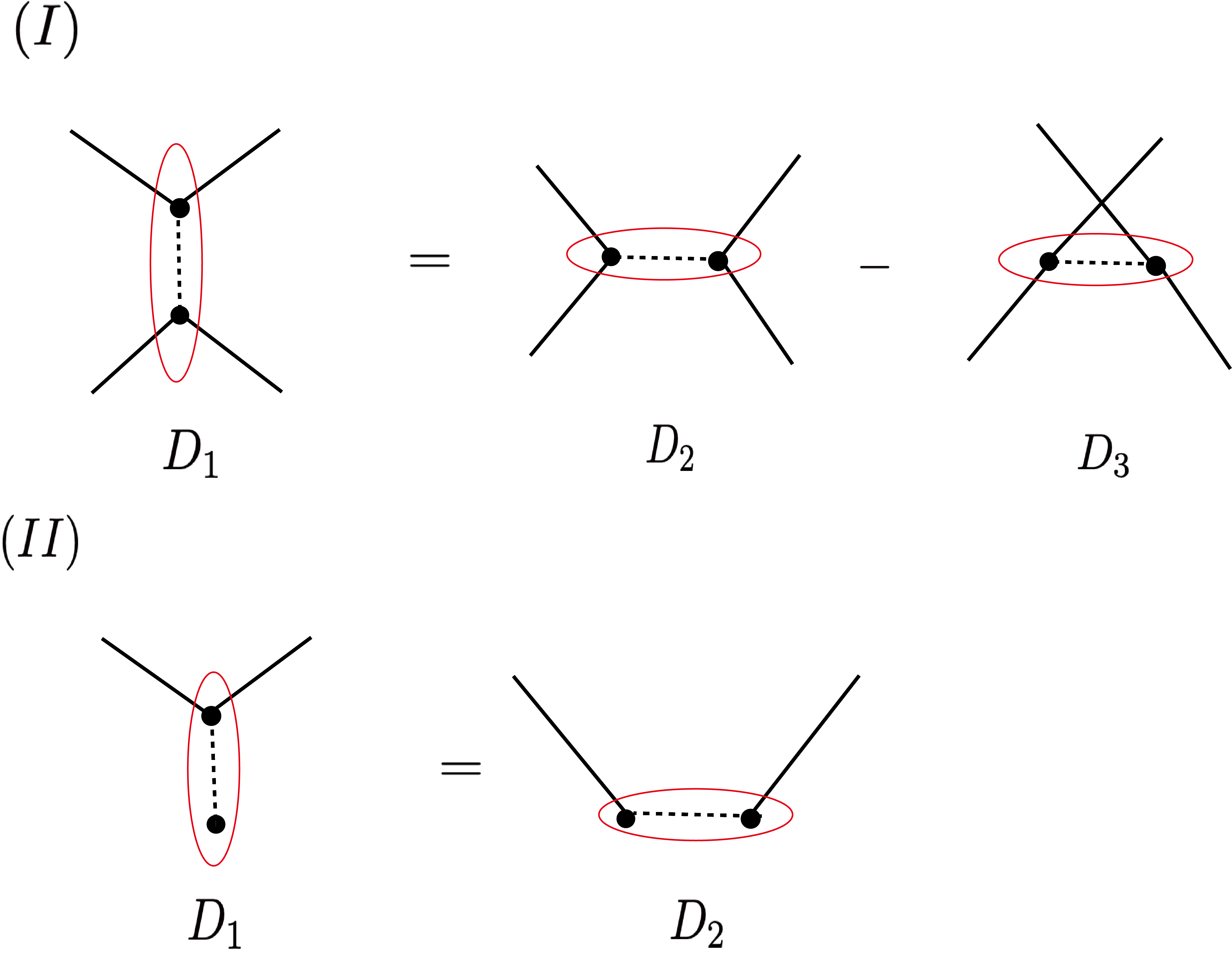}
\caption{Chord relations (I)(II)}
\label{chordrelation}
\end{center}
\end{figure}

\end{definition}

\begin{rem}
We exclude contractions of chords that yield loops consisting of only solid edges for the following reason: if the dimension $j$ of the space of embedding $\mathcal{K}_{n,j}$ is large enough comparing the order (size) $k$, more precisely $j\geq 2k$, the integral by such graphs vanish by dimensional reason. This argument is introduced in Section 8 of \cite{Yos}. Our second result \ref{main result 2introduction} holds even if we admit graphs with solid loops and add chord relations that include such graphs. 

\end{rem}

\subsection{The hairy graph homology $\mathcal{B}$}

We define the space $\mathcal{B}$ of hairy graphs. 
The space $\mathcal{B}$ is an analog of the space of open Jacobi diagrams modulo IHX and AS relations (and they coincide in the odd case). 

\begin{definition}[Hairy graphs]
A hairy graph is a connected graph that satisfies the following. 
\begin{itemize}
\item There are (at most) two types of vertices: external vertices drawn in black and internal vertices drawn in white. The graph must have at least one external vertex.
\item There is only one type of edges, dashed.
\item Each external vertex has exactly one dashed edge. Each internal vertex has exactly three dashed edges. 
\end{itemize}

We can define color and orientation of hairy graphs by those of BCR graphs.
\end{definition}

\begin{definition}
\label{defofhairygraphhomology}
We write $\mathcal{C}$ for the vector space over $\mathbb{Q}$ generated by colored hairy graphs.
We define the space $\mathcal{B}$ as the quotient space of $\mathcal{C}$ by orientation relations and IHX relations. In the odd case, we add the following relation (See Remark \ref{smallloop2}). 
\[
\begin{tikzpicture}[x=1pt,y=1pt,yscale=2,xscale=2, baseline=1pt, line width = 1pt]
\draw [dash pattern = on 5pt off 4pt] (0,0) .. controls (-20,20) and (20,20)  .. (0,0);
\draw (0,-1) circle (1.5);
\draw [fill= black](0,-10) circle (1.5);
\draw [dash pattern = on 5pt off 4pt] (0,-2.5)-- (0,-8.5);
\end{tikzpicture}=0\quad (\text{odd case}).
\]

\end{definition}

\begin{rem}
\label{remarkfromEC0}
In \cite{AT2}, Arone and Turchin introduced a graph complex $\mathcal{E}^{j,n}_{\pi}$, which is shown to be isomorphic to the rational homotopy group $\pi_{\ast} (\mathcal{K}_{n,j}) \otimes \mathbb{Q}$ for $n-j \geq 3$ by Fresse, Turchin and Willwacher \cite{FTW}. The top part of $H_{\ast} (\mathcal{E}^{j,n}_{\pi}) $ is isomorphic to $\mathcal{B}$. In the odd case, $\mathcal{B}$ is isomorphic to the space of hairy graphs modulo IHX and AS relations \cite{Bar}. 
\end{rem}

\subsection{The space of BCR chord diagrams $\mathcal{A}^c_{BCR}$}

Next, we define the spaces of BCR chord diagrams $\mathcal{A}^c_{BCR}$ and $\overline{\mathcal{A}}^c_{BCR}$.
These spaces are analogs of the space of chord diagrams  $\mathcal{A}^c(S^1)$, which appears in the theory of Vassiliev knot invariants.

\begin{definition}[BCR chord diagrams]
\label{defBCRchorddiagrams}
A BCR chord diagram is a connected graph that satisfies the following. 
\begin{itemize}
\item There is only one type of vertices: external vertices drawn in black. 
\item There are two types of edges, solid and dashed.
\item Each external vertex has exactly one dashed edge and at most two solid edges. 
\item The restriction to solid edges of the graph forms a disjoint union of broken lines. See Definition \ref{defBCRgraphhomology}.
\end{itemize}

We can define color and orientation of BCR chord diagrams by those of BCR graphs.
\end{definition}

\begin{definition}
We write $\mathcal{D}^c$ for the vector space over $\mathbb{Q}$ generated by colored BCR chord diagrams.
We define $\mathcal{A}^c_{BCR}$  as the quotient space of  $\mathcal{D}^c$ by orientation relations and $4T$ relations, which are defined below.
\end{definition}

\begin{definition}
We define $4T$ relations (I)(II)(III) of colored BCR chord diagrams as in Figure \ref{4Trelation} below. In Figure \ref{4Trelation}, graphs $D_i$ are allowed to have any colors so that $S = D_1 - D_2 $ and $S = D_3 - D_4 $ are both STU relations for some colored BCR graph $S$. The graph $S$ is uniquely determined after orientation relations in $\mathcal{A}_{BCR}$.

 \begin{figure}[htpb]
\begin{center}
\includegraphics[width = 10cm]{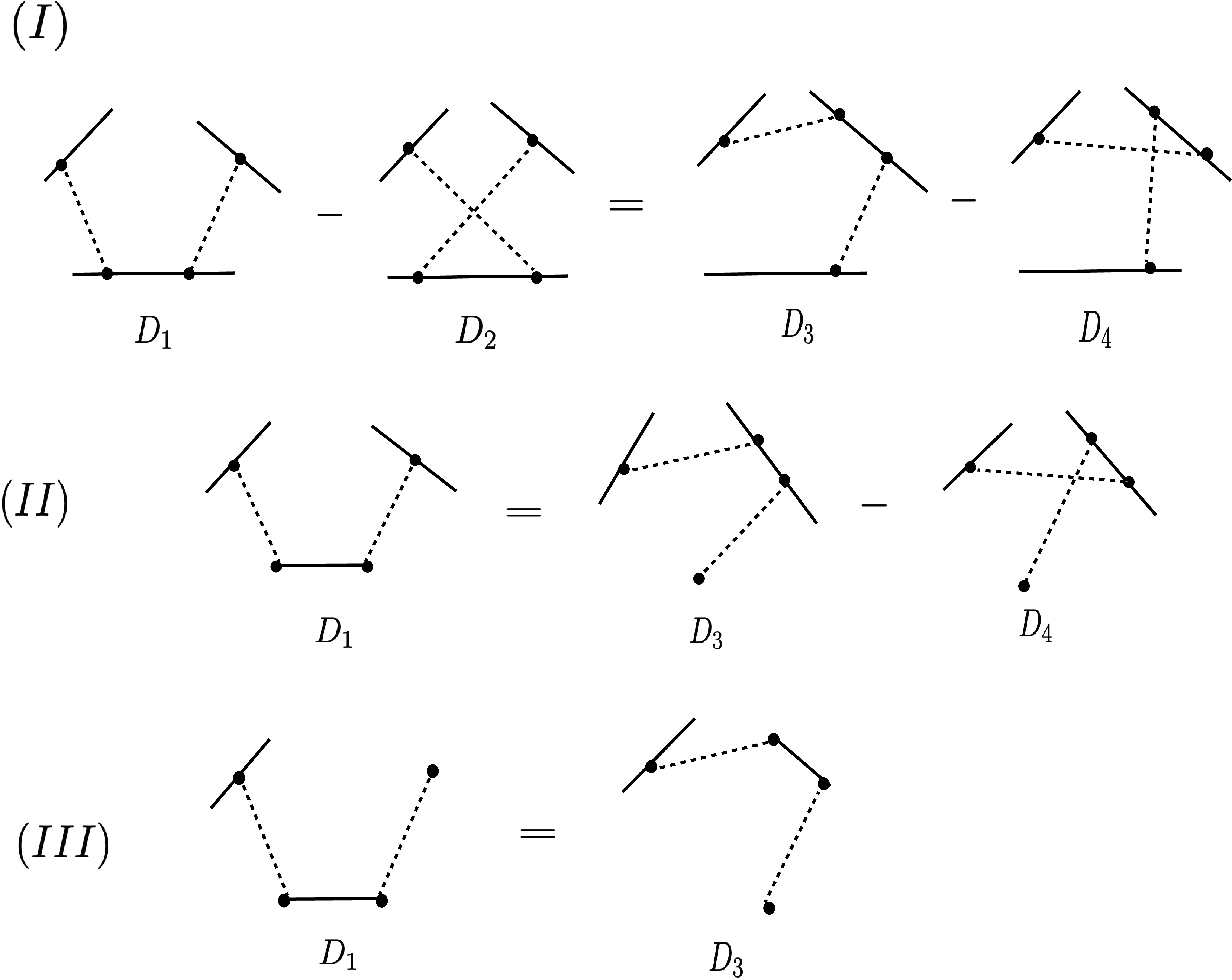}
\caption{4T relations (I)(II)(III)}
\label{4Trelation}
\end{center}
\end{figure}

\end{definition}

\begin{definition}
We define $\overline{\mathcal{A}}^c_{BCR}$  as the quotient space of  $\mathcal{D}^c$ by orientation relations, $4T$ relations and chord relations. 
Here, we only use chord relations in which all the graphs $D_i$ are BCR chord diagrams. 
\end{definition}

\begin{rem}
\label{remarkfromEC2}
In \cite{AT2}, Arone and Turchin introduced a complex $\mathcal{K}^{j,n}_{\pi}$ that is quasi-isomorphic to $\mathcal{E}^{j,n}_{\pi}$. Though $\mathcal{K}^{j,n}_{ \pi}$ is a little difficult to describe, its dual complex $HH^{j,n}_{\pi}$ is described in terms of graphs \cite{AT2}. The top (non-degenerate) part of $HH^{j,n}_{\pi}$  is generated by BCR chord diagrams. However, they allow not only broken lines but also trees as the restriction to solid edges. They also impose Arnold relations that come from relations in  $H^{\ast}(C_k(\mathbb{R}^m))$ and $H^{\ast}(C_k(\mathbb{R}^n))$. As a consequence of Arnold relations (with respect to solid edges), trees of solid edges can be transformed into a linear combination of broken lines. Our $4T$ relations correspond to the differential $d$ of $HH^{j,n}_{\pi}$ (with  Arnold relations with respect to dashed edges). So far, the author finds nothing related to chord relations in the complex $HH_{j,n}^{\pi}$.
\end{rem}

\section{Main Result}
\label{Main Result} 

\subsection{The map $\chi_{\ast}: \mathcal{B}  \rightarrow \mathcal{A}_{BCR}$}
\begin{prop} \label{Main Result 01}
We define the map $\chi: \mathcal{C} \rightarrow \mathcal{D}$ by the inclusion. Then this map induces $\chi_{\ast}: \mathcal{B} \rightarrow \mathcal{A}_{BCR}$. 
\end{prop}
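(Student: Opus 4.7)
The plan is straightforward verification that every defining relation of $\mathcal{B}$ survives when a hairy graph is viewed as a BCR graph. First I would check that the inclusion $\chi:\mathcal{C}\to\mathcal{D}$ is well-defined on generators: a hairy graph satisfies every clause of the BCR graph definition, since external vertices carry one dashed edge and zero (hence at most two) solid edges, internal vertices carry three dashed edges and no solid edge, there is at least one external vertex, and the empty set of solid edges is vacuously a disjoint union of broken lines. The coloring conventions (orientations and vertex labels in the odd case, edge numberings in the even case) are defined identically on hairy and BCR graphs, so colored generators map to colored generators with matching notions of same/opposite orientation.

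Next I would verify the three relations defining $\mathcal{B}$. For orientation relations, the agreement of coloring conventions means that two hairy graphs related by an odd permutation of their coloring are also related by an odd permutation of their coloring as BCR graphs, so these relations map into orientation relations in $\mathcal{D}$. For IHX relations, the relation in $\mathcal{B}$ is stated between three hairy graphs $D_1, D_2, D_3$ obtained by the three contractions of a dashed edge between two internal vertices; this is exactly the shape of the IHX relation in $\mathcal{A}_{BCR}$ as depicted in Figure \ref{IHXrelation}, where the four external legs of the local picture are dashed edges leading into the rest of the graph (which happens to contain only dashed edges and no solid edges, but this is not an obstruction).

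The only remaining defining relation of $\mathcal{B}$ is the odd-case vanishing of a small dashed loop at an internal vertex connected to an external vertex. This is not an axiom of $\mathcal{A}_{BCR}$, but by Remark \ref{smallloop2} it holds in $\mathcal{A}_{BCR}$ as a consequence of an STU relation in the odd case. Hence its image in $\mathcal{A}_{BCR}$ is zero, which is what is needed for the factorization. (The analogous vanishing of a small dashed loop at an internal vertex connected to another internal vertex is automatic from IHX on both sides and needs no separate check.)

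The whole argument is a routine relation-by-relation check, so no step is genuinely hard; the only mild subtlety is bookkeeping of the coloring conventions to confirm that orientation matches through $\chi$, and remembering to invoke STU (rather than IHX) for the odd-case small-loop relation since the loop is attached to an external vertex.
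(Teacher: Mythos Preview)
Your proposal is correct and is essentially the same approach as the paper's, just spelled out in full detail: the paper dismisses Propositions \ref{Main Result 01} and \ref{Main Result 02} together as ``trivial by the definitions of their relations,'' and your relation-by-relation check (orientation, IHX, and the odd-case small-loop via Remark \ref{smallloop2}) is exactly the routine verification that justifies that sentence.
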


\begin{theorem}\label{Main Result 1}
The map $\chi_{\ast}: \mathcal{B}  \rightarrow \mathcal{A}_{BCR}$ is a monomorphism.  
\end{theorem}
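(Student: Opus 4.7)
\emph{Plan.} The strategy, closely following Bar-Natan's proof of the PBW isomorphism $\mathcal{B}\cong\mathcal{A}(S^1)$, is to construct a left inverse $\sigma_{\ast}\colon\mathcal{A}_{BCR}\to\mathcal{B}$ of $\chi_{\ast}$; injectivity will then be immediate. At the level of generators I define $\sigma\colon\mathcal{D}\to\mathcal{C}$ by setting $\sigma(\Gamma)=\Gamma$ when the BCR graph $\Gamma$ carries no solid edge (so $\Gamma$ may be read directly as a hairy graph in $\mathcal{C}$) and $\sigma(\Gamma)=0$ otherwise. Since $\sigma\circ\chi$ is manifestly the identity on $\mathcal{C}$, once $\sigma$ is shown to descend to the quotient we obtain $\sigma_{\ast}\circ\chi_{\ast}=\mathrm{id}_{\mathcal{B}}$ and the theorem follows.

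To check that $\sigma$ passes to the quotient I examine each defining relation of $\mathcal{A}_{BCR}$. Orientation relations are evidently compatible: on solid-edge-free graphs they coincide with the orientation relations of $\mathcal{C}$, and on graphs carrying any solid edge both sides are sent to $0$. The odd-case small-loop vanishing of Definition \ref{defofhairygraphhomology} is matched by the STU consequence noted in Remark \ref{smallloop2}. An IHX triple in $\mathcal{A}_{BCR}$ arises by uncontracting a dashed middle edge, so its three terms share the same solid-edge subgraph; either this subgraph is empty and we recover an IHX triple in $\mathcal{C}$ (hence $0$ in $\mathcal{B}$), or it is nonempty and all three terms vanish under $\sigma$. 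In case (I) of the STU relations the bottom vertex of $D_1$ carries a solid edge which, by the requirement that the contracted middle edges agree, persists in $D_2$ and $D_3$; hence each summand is killed by $\sigma$.

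The one genuinely delicate case is STU (II): the bottom vertex of $D_1$ has no solid edge, and when no other edge of $D_1$ is solid either, all three graphs in the relation are simultaneously hairy and $\sigma$ produces a nontrivial element of $\mathcal{C}$ that must be exhibited as a consequence of the IHX, orientation, and (in the odd case) small-loop relations defining $\mathcal{B}$. This is an analog of Bar-Natan's classical observation that at a univalent external vertex the STU relation is implied by AS and IHX, and I expect to verify it by a direct picture-level expansion identifying STU (II) on hairy configurations with a single IHX triple at the trivalent internal vertex adjacent to the dashed middle edge. The main obstacle is precisely this identification: one must carry it out rigorously in both colorings of Definition \ref{defcolororientation}, with careful bookkeeping of edge-contraction signs in the odd case and of edge-label signs in the even case. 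Once completed, $\sigma$ descends to $\sigma_{\ast}$, the identity $\sigma_{\ast}\circ\chi_{\ast}=\mathrm{id}_{\mathcal{B}}$ is realized, and Theorem \ref{Main Result 1} follows.
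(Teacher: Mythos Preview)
Your proposal has a genuine gap in the treatment of STU (II), and the approach cannot be repaired along the lines you sketch.

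The misunderstanding is here: ``when no other edge of $D_1$ is solid either, all three graphs in the relation are simultaneously hairy.'' This is false. In STU (II) the bottom external vertex $v$ of $D_1$ has no solid edge, but the other graphs $D_i$ in the relation are obtained by replacing the internal vertex $w$ (and $v$) by a \emph{new length-$2$ solid component}: two fresh external vertices joined by a solid edge, carrying the two dashed edges that $w$ had besides $(v,w)$. One sees this directly from the defining condition that all $D_i$ contract their middle edge to the same graph: after contracting $(v,w)$ in $D_1$ one gets a vertex with two dashed and zero solid edges, and the only other BCR edge whose contraction produces such a vertex is a solid edge forming an isolated length-$2$ component. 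The paper's own example $F = E$ (an STU (II) relation) exhibits exactly this: $F$ is a hairy graph, while $E$ carries a solid edge.

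Consequently your map $\sigma$ sends the STU (II) relation to $D_1 - 0 = D_1$ in $\mathcal{B}$, and this is \emph{not} zero in general. In the particular example $F$ happens to vanish in $\mathcal{B}$ (it has a double dashed edge), but for a generic nonzero hairy graph $D_1$ with an internal vertex adjacent to an external leg --- for instance a wheel diagram --- the image is nonzero, so $\sigma$ does not descend. There is no IHX identity to rescue this: IHX operates entirely within the dashed part and cannot account for the solid edge appearing on the right-hand side of STU (II).

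The paper's proof is accordingly much more elaborate. Rather than a naive projection, it builds $\sigma_k\colon\mathcal{D}_k\to\mathcal{B}$ inductively by averaging over the group $\mathfrak{S}_k(D)\cong\prod_l\mathfrak{S}_{i_l}$ of permutations preserving each solid component, using STU to reduce the number of external vertices at each step. Well-definedness then hinges on checking that the construction respects the Coxeter relations $U_i^2=1$, $U_iU_j=U_jU_i$, and the braid relation $U_iU_{i-1}U_i=U_{i-1}U_iU_{i-1}$, the last of which is where IHX genuinely enters. This is the direct analogue of Bar-Natan's PBW argument, and the inductive averaging is essential precisely because STU (II) mixes hairy graphs with non-hairy ones.
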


\subsection{The map $\iota_{\ast}: \mathcal{A}^c \rightarrow \mathcal{A}_{BCR}$}
\begin{prop} \label{Main Result 02}
We define the map $\iota: \mathcal{D}^c \rightarrow \mathcal{D}$ by the inclusion. Then this map induces the maps  $\iota_{\ast}: \mathcal{A}^c_{BCR} \rightarrow \mathcal{A}_{BCR}$ and $\overline{\iota}_{\ast}: \overline{\mathcal{A}}^c_{BCR} \rightarrow \overline{\mathcal{A}}_{BCR}$. 
\end{prop}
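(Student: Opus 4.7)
The plan is to show that the inclusion $\iota$ on generators sends each relation defining $\mathcal{A}^c_{BCR}$ (respectively $\overline{\mathcal{A}}^c_{BCR}$) to a combination of relations already in force in $\mathcal{A}_{BCR}$ (respectively $\overline{\mathcal{A}}_{BCR}$). Because a BCR chord diagram is by definition a BCR graph with no internal vertex, $\iota: \mathcal{D}^c \hookrightarrow \mathcal{D}$ is a literal inclusion of generating sets preserving the color/orientation data of Definition \ref{defcolororientation}, so orientation relations on $\mathcal{D}^c$ map to orientation relations on $\mathcal{D}$ tautologically, and no IHX relation is ever triggered on the image of $\iota$ (chord diagrams have no internal vertices to contract). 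This reduces the whole verification to two points: the $4T$ relation, and, for the barred version, the chord relation.

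The substantive step is the $4T$ relation. By construction, a $4T$ relation on chord diagrams reads $D_1 - D_2 - D_3 + D_4 = 0$, where there exists a single colored BCR graph $S$ (containing exactly one internal vertex, born by contracting one end of a would-be chord) fitting into two STU identities $D_1 - D_2 = S$ and $D_3 - D_4 = S$. Both of these STU identities are already imposed in $\mathcal{A}_{BCR}$, so $\iota(D_1 - D_2 - D_3 + D_4) = S - S = 0$ in $\mathcal{A}_{BCR}$. I would carry this out case by case through the three configurations (I)(II)(III) of Figure \ref{4Trelation}, matching the coloring conventions of Figure \ref{STUrelation} in each case, and invoking the clause of the $4T$ definition that asserts uniqueness of $S$ up to orientation in $\mathcal{A}_{BCR}$ to rule out a possible sign mismatch between the two STU expansions.

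For $\overline{\iota}_{\ast}$, the chord relations imposed on $\mathcal{D}^c$ are by definition those chord relations of $\mathcal{D}$ all of whose constituent graphs happen to be chord diagrams, hence are killed in $\overline{\mathcal{A}}_{BCR}$ by construction; together with the previous paragraph this yields the well-definedness of both $\iota_{\ast}$ and $\overline{\iota}_{\ast}$. The main (and only nontrivial) obstacle is the sign bookkeeping inside the $4T$ check, because the signs produced by contracting the distinguished edges in Figure \ref{STUrelation} depend on the vertex/edge labelings, and one must confirm that the two recipes $(D_1,D_2) \mapsto S$ and $(D_3,D_4) \mapsto S$ produce the same signed element; this is exactly what the uniqueness clause in the definition of $4T$ relations is designed to guarantee, and I would make that step explicit by writing down a representative coloring in each of (I)(II)(III).
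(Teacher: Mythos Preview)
Your proposal is correct and is essentially the same approach as the paper's: the paper simply states that the proof ``is trivial by the definitions of their relations,'' and your write-up just unpacks that triviality (orientation relations map to orientation relations, a $4T$ relation $D_1-D_2-D_3+D_4$ maps to $S-S=0$ via the two defining STU identities, and chord relations in $\mathcal{D}^c$ are already chord relations in $\mathcal{D}$). There is no difference in method, only in level of detail.
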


\begin{theorem}\label{Main Result 2}
Both of the maps  $\iota_{\ast}: \mathcal{A}^c_{BCR} \rightarrow \mathcal{A}_{BCR}$ and $\overline{\iota}_{\ast}: \overline{\mathcal{A}}^c_{BCR} \rightarrow \overline{\mathcal{A}}_{BCR}$ are isomorphisms.  
\end{theorem}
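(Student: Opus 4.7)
The plan is to imitate Bar-Natan's PBW-type proof that $\mathcal{A}(S^{1}) \cong \mathcal{A}^{c}(S^{1})$, which is the paradigm the section has carefully set up (especially via the definition of $4T$ as a compatibility between pairs of STU relations $S=D_{1}-D_{2}=D_{3}-D_{4}$). Concretely, I would show $\iota_{\ast}$ is an isomorphism by constructing an inverse $\rho_{\ast}\colon \mathcal{A}_{BCR}\to\mathcal{A}^{c}_{BCR}$ that eliminates all internal vertices by repeated STU contractions, then upgrade this to the quotient statement for $\overline{\iota}_{\ast}$ by tracking chord relations through the construction.

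Surjectivity of $\iota_{\ast}$ is an induction on the number $k$ of internal vertices of a BCR graph $\Gamma$. If $k=0$, then $\Gamma$ is already a BCR chord diagram. If $k\geq 1$, connectedness together with the presence of at least one external vertex forces some internal vertex $v$ to be adjacent via a dashed edge to an external vertex $e$ (start at any external vertex and follow its dashed edges/solid strands into the internal subgraph). Applying STU (I) when $e$ carries solid edges and STU (II) when it does not rewrites $\Gamma$ modulo $\mathcal{A}_{BCR}$-relations as a linear combination of graphs with exactly $k-1$ internal vertices, and the inductive hypothesis finishes the argument.

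For injectivity, I would define $\rho\colon \mathcal{D}\to\mathcal{A}^{c}_{BCR}$ on a generator $\Gamma$ by fixing some order in which to eliminate its internal vertices by STU, and verify three things: (i) the resulting class in $\mathcal{A}^{c}_{BCR}$ is independent of the order of eliminations; (ii) IHX relations in $\mathcal{D}$ are sent to zero; (iii) STU and orientation relations in $\mathcal{D}$ are sent to zero. Item (iii) is tautological from the construction. For (i), the base case is a graph with two internal vertices $v,v'$ each adjacent to an external vertex, for which the two orderings produce chord diagrams that differ exactly by a $4T$ relation; the three cases (I), (II), (III) of $4T$ correspond precisely to whether the external vertices involved carry $0$, $1$, or $2$ solid edges. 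A confluence argument on $k$ reduces the general case. For (ii), a local IHX triple at an internal vertex adjacent to an external vertex becomes zero after one round of STU, and deeper IHX triples reduce to this case after first contracting the surrounding internal structure. Once $\rho_{\ast}$ is defined, $\rho_{\ast}\circ\iota_{\ast}=\mathrm{id}_{\mathcal{A}^{c}_{BCR}}$ is immediate, and $\iota_{\ast}\circ\rho_{\ast}=\mathrm{id}_{\mathcal{A}_{BCR}}$ follows because every STU step used in the reduction vanishes in $\mathcal{A}_{BCR}$.

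The quotient version $\overline{\iota}_{\ast}$ then follows by two observations: chord relations in $\overline{\mathcal{A}}^{c}_{BCR}$ are by definition the chord relations in $\overline{\mathcal{A}}_{BCR}$ in which all participating graphs happen to be BCR chord diagrams, so the map descends on the source; and $\rho_{\ast}$ sends a chord relation in $\overline{\mathcal{A}}_{BCR}$ whose ambient graphs carry internal vertices to a linear combination of chord relations among BCR chord diagrams plus $4T$ relations, so the inverse descends as well. The main obstacle I expect is step (i): one must check systematically, across all three cases of STU, the two orientation conventions (odd and even), and all local adjacency patterns of $v$, $v'$ and their external neighbours (including the degenerate configurations where $v$ and $v'$ share a neighbour, or where one of them is adjacent to the other), that every local confluence is accounted for by exactly one $4T$ relation with the correct sign. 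The sign bookkeeping coming from iterated edge contractions, matched against the signs built into the definition of $4T$ via the STU pairs $S = D_1 - D_2 = D_3 - D_4$, is the delicate part of the verification.
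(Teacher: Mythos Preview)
Your overall architecture is the same as the paper's: construct an inverse $\rho_\ast$ (the paper calls it $\kappa_\ast$) by resolving all internal vertices via STU, prove this is well-defined modulo $4T$, deduce IHX from STU, and finally check that chord relations are preserved. Surjectivity, the handling of IHX, and the passage to the barred version are all as in the paper.

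The gap is in step (i), and it is not the one you flag. Your base case and your confluence argument are phrased as a comparison between resolving two \emph{distinct} internal vertices $v,v'$ in two orders. In the paper, the confluence where $4T$ actually appears is the comparison between two \emph{edges} $e_1,e_2$ landing on the \emph{same} internal vertex $w$; when $w_1\neq w_2$ the two orderings literally commute and there is nothing to check. The paper's base case is accordingly $\#W(D)=1$: a single internal vertex with three edges to external vertices, and the three possible STU resolutions differ pairwise by exactly one $4T$.

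More seriously, the inductive step has a case your proposal does not cover. Suppose $e_1$ and $e_2$ are the only edges from internal to external vertices, both meeting the same internal vertex $w$, and the third edge of $w$ goes into a block $B$ consisting entirely of internal vertices and dashed edges. There is no third edge $e_3$ to route through, so confluence cannot be reduced to a previously known case. The paper handles this with a separate lemma (``Sliding of chords''): after performing STU at $e_i$ and then fully resolving $B$, the resulting combination of chord diagrams is shown to \emph{vanish} in $\mathcal{A}^c_{BCR}$ by a telescoping sum of $4T$ relations, so both choices give zero. This is a genuine structural argument, not sign bookkeeping, and without it the well-definedness of $\rho$ is incomplete.
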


\section{Preliminary for proof of Main Result }
\label{Preliminary for proof of Main Result}
Here, we introduce notations used in Section \ref{Proof of Main Result}.
\begin{definition}
A solid component of a BCR graph $D$ is a component of solid broken lines. That is, a solid component consists of black vertices $v_{1}, v_{2}, \dots, v_{n}$ and solid edges $e_i = (v_{i}, v_{i+1})$ for any $i = 1, \dots, n-1$, where $v_1, v_2, \dots, v_n$ have no other solid edges. 
The solid type $T(D)$ of $D$ is the collection (in descending order) of lengths of solid components. For example, the graph in Figure \ref{solid type} has the solid type $(2,2)$. 
\end{definition}

\begin{figure}[htpb]
\begin{center}
\includegraphics[width = 5cm]{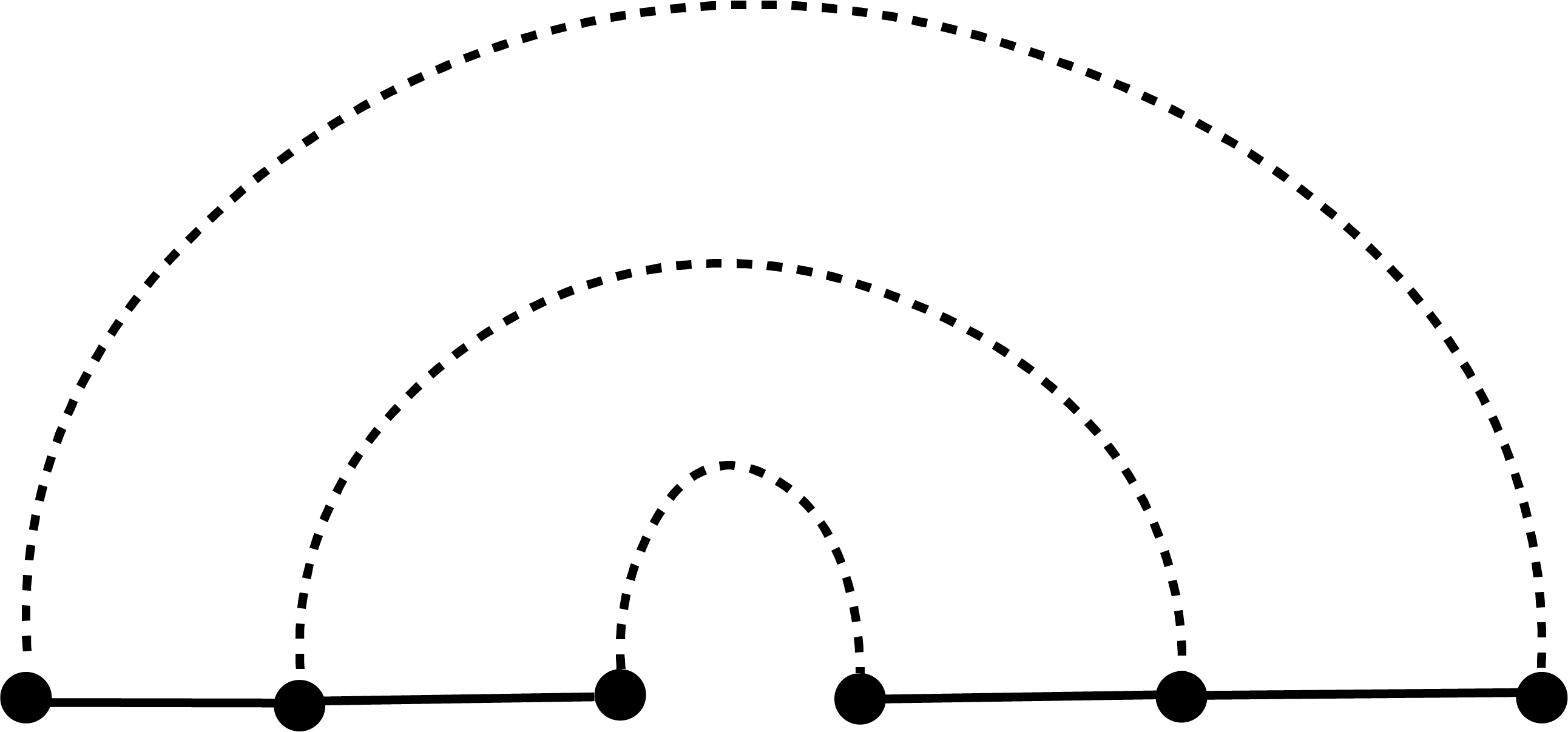}
\caption{A graph with the solid type (2,2) }
\label{solid type}
\end{center}
\end{figure}

\begin{notation}
Let $\mathcal{C}_k$ be the subspace of  $\mathcal{C}$ spanned by colored hairy graphs with at most $k$ external vertices. 
Let $\mathcal{D}_k$ be the subspace of  $\mathcal{D}$ spanned by colored BCR graphs with at most $k$ external vertices. Clearly $\chi: \mathcal{C} \rightarrow \mathcal{D}$ restricts to $\chi: \mathcal{C}_k \rightarrow \mathcal{D}_k$. 
Let $\mathcal{I}_k$ be the subspace of $\mathcal{D}_k$ spanned by STU relations of graphs with at most $k$ external vertices. 

\end{notation}

\begin{notation}
Let $D$ be a colored BCR graph with $k$ external vertices. Assume that solid components of $D$ are in a row as in Figure \ref{permutationofgraph}. 
For $\pi \in \mathfrak{S}_k$, let $(\pi D)_u$  be the colored graph obtained from $D$ by permutating dashed edges so that $i$th edge from the left goes to the $\sigma(i)$th. 
Then we set
\[
\pi D = \text{sign}(\pi) (\pi D)_u \in \mathcal{D}.
\]
\end{notation}
Let $\mathfrak{S}_k(D)$ be the subset of $\mathfrak{S}_k$ which consists of permutations that preserve solid components. That is, if $\pi \in \mathfrak{S}_k$, the $i$th and $\pi(i)$th edges must be connected to the same solid component. If the solid type of $D$ is $T(D) = (i_1,\dots, i_s)$, $\mathfrak{S}_k(D)$ is isomorphic to 
\[
\prod_{l=1}^{s} \mathfrak{S}_{i_l}.
\]

\begin{figure}[htpb]
\begin{center}
\includegraphics[width = 10cm]{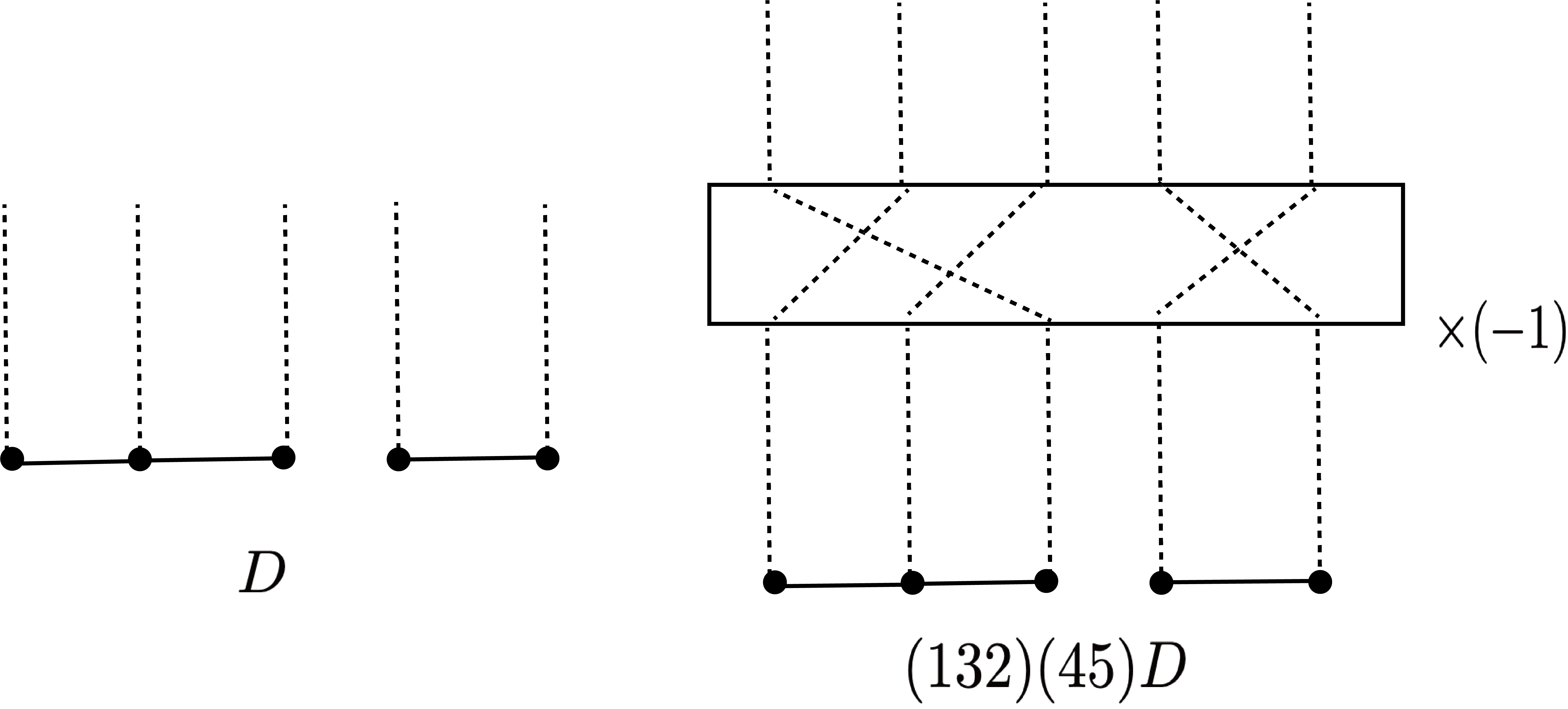}
\caption{A permutation of $D$}
\label{permutationofgraph}
\end{center}
\end{figure}

\begin{notation}
Let $D$ be a colored BCR graph with $k$ external vertices. Let $U_i$ be a transposition in $\mathfrak{S}_k(D)$. 
We define $S_i D$ as a colored BCR graph with $k-1$ vertices so that the equation
\[
S_i D = D - U_i D
\]
is an STU relation. The graph $S_i D$ is determined uniquely after orientation relation in $\mathcal{D}$. See Figure \ref{STUrelation2}.

\begin{figure}[htpb]
\begin{center}
\includegraphics[width = 10cm]{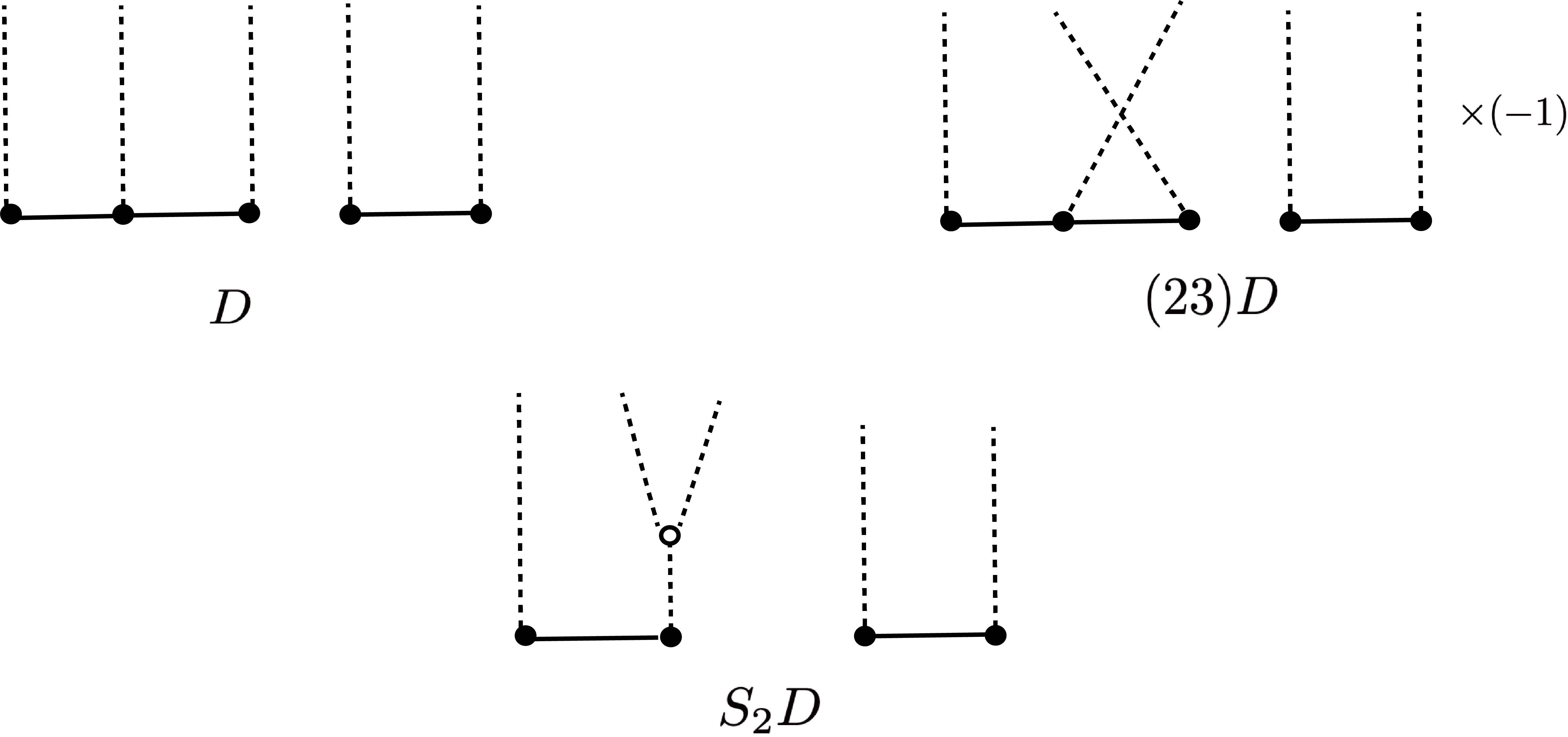}
\caption{Example of $S_i D$}
\label{STUrelation2}
\end{center}
\end{figure}

\end{notation}

\section{Proof of Main Result}
\label{Proof of Main Result}

The proof of Proposition \ref{Main Result 01} and \ref{Main Result 02} is trivial by the definitions of their relations. 
We prove Theorem \ref{Main Result 1} and Theorem \ref{Main Result 2}  in the following. The proof is very similar to the proof of \cite[Theorem 8]{Bar} and \cite[Theorem 6]{Bar}.

\subsection{$\chi_{\ast}: \mathcal{B} \rightarrow \mathcal{A}_{BCR}$ is a monomorphism}

\begin{proof}[Proof of Theorem \ref{Main Result 1}] 
We construct the left inverse $\sigma_{\ast}$ of the map $\chi_{\ast}$. 
We construct inductively the map $\sigma_k: \mathcal{D}_k  \rightarrow \mathcal{B}$ which satisfies the following. 
\begin{itemize}
\item [($\Sigma 1$)] $\sigma_k \circ \chi_k: \mathcal{C}_k \rightarrow \mathcal{B}$  is the projection. 
\item [($\Sigma 2$)] $\sigma_k$(orientation relation) $=0. $
\item [($\Sigma 3$)] $\sigma_k$(IHX relation) $=0. $ 
\item [($\Sigma 4$)] $\sigma_k(\mathcal{I}_k) =0. $
\end{itemize}
First, we define $\sigma_{1}: \mathcal{D}_1 \rightarrow  \mathcal{B}$ as the identity. Clearly $\sigma_{1}$ satisfies ($\Sigma 1$)($\Sigma 2$)($\Sigma 3$)($\Sigma 4$). 
Next we assume that $\sigma_{k-1}: \mathcal{D}_{k-1}  \rightarrow  \mathcal{B}$ is constructed so that it satisfies ($\Sigma 1$)($\Sigma 2$)($\Sigma 3$)($\Sigma 4$).
Then we define $\sigma_{k}: \mathcal{D}_{k}  \rightarrow \mathcal{B}$ by
\begin{equation*}
\sigma_{k}(D) =  \frac{1}{|\mathfrak{S}_k(D)|} \sum_{\pi \in \mathfrak{S}_k(D)} \sigma_{k-1} (\Gamma_D(\pi)) 
\end{equation*}
for a graph $D$ with $k$ external vertices.\footnote{Remember that Bar-Natan's original construction was $\sigma_k (D) = \frac{1}{k!} (D^{cc} + \sum_{\pi_\in \mathfrak{S}_k} \Lambda_D(\pi))$. We do not use $D^{cc}$ part.}
Here $\Gamma_D(\pi)$ is an element of $\mathcal{D}_{k-1}$ determined uniquely after orientation relations. In $\mathcal{A}_{BCR}$, the element $\Gamma_D(\pi)$ satisfies
\[
D - \pi D = \Gamma_D(\pi).
\]
 
We explain the construction of $\Gamma_{D} (\pi)$. 
If $\pi$ is the elementary transposition $U_{i} = (i,i+1) \in \mathfrak{S}_k(D)$, we set $\Gamma_{D} (\pi = U_i) = S_i D$. For a general permutation $\sigma \in \mathfrak{S}_k(D)$, write $\sigma$ as the product of elementary transpositions of $\mathfrak{S}_k(D)$: $\pi = U_{i_1}\dots U_{i_n }$. 
Then 
\[
\Gamma_D (\pi) = \sum_{l=1}^n  S_{i_l} U_{i_{l+1}}\dots U_{i_n}\Gamma. 
\]

\begin{example}
Let $D = \graphD $, $D^{\prime} = \graphDb$, $E = \graphE$, $F = \graphF$ respectively. Then we have

\begin{equation*}
\Gamma_D(\pi) =
\begin{cases}
E & \text{when\ } \pi = (1\ 2), (2\ 3), (1\ 2\ 3), (1\ 3\ 2) \in \mathfrak{S}_3,  \\
0 & \text{when\ } \pi = id, (1\ 3) \in \mathfrak{S}_3. \\
\end{cases}
\end{equation*}
So we have 
\begin{align*}
\sigma_4(D) &= \frac{1}{6} \sum_{\pi \in \mathfrak{S}_3} \sigma_3(\Gamma_D(\pi))= \frac{2}{3} \sigma_3(E) = \frac{2}{3} \sigma_2 (F) = \frac{2}{3} F .
\end{align*}
Similarly, we have
\[
\sigma_4(D^{\prime})  = \frac{1}{3} \sigma_3 (E) = \frac{1}{3} \sigma_2 (F) = \frac{1}{3} F
\]
Note that $F = E = D+D^{\prime}$ are STU relations.

\end{example}

We must show that $\sigma_k(D)$ does not depend on choices of the expression  $\pi = U_{i_1}\dots U_{i_n }$. Let $\mathcal{M}_k(D)$ be the monoid generated by elementary transpositions of $\mathfrak{S}_k(D)$. We have constructed the map 
\[\Lambda_D(\pi) = \sigma_{k-1} \circ \Gamma_D : \mathbb{Q}[\mathcal{M}_k(D)] \rightarrow \mathcal{B}
\] from the monoid ring of $\mathcal{M}_k(D)$. 
To show it defines a map from $\mathbb{Q}[\mathfrak{S}_k(D)]$, 
it is enough to show the kernel of $\Lambda_D$ includes the kernel $\mathcal{K}$ of the map $\mathbb{Q}[\mathcal{M}_k(D) ]\rightarrow \mathbb{ Q}[\mathfrak{S}_k(D)]$. The kernel $\mathcal{K}$ is the both-sided ideal generated by 
\begin{itemize}
\item [(G1)] $U_i^2 - id$,
\item [(G2)] $U_i U_j - U_j U_i = id\ (|i-j| >1)$,
\item [(G3)] $U_i U_{i-1} U_i - U_{i-1} U_i U_{i-1}$.
\end{itemize}
Here the $U_i$ s in each relation must be elements of the same $\mathfrak{S}_{i_l}$ of $\mathfrak{S}_k(D) = \prod_{l=1}^{s} \mathfrak{S}_{i_l}$.

Let $D$ be a colored graph of the even case

\begin{figure}[htpb]
\centering
\begin{tikzpicture}[x=0.75pt,y=0.75pt,yscale=0.4,xscale=0.4, baseline=0pt, line width = 1pt] 
\draw (-10,0)--(-250,0); 
\draw (10,0)--(200,0); 

\draw [dash pattern={on 5pt off 4pt}]  (-200,0)--(-200,100); 
\draw [dash pattern={on 5pt off 4pt}]  (-150,0)--(-150,100); 
\draw [dash pattern={on 5pt off 4pt}]  (-100,0)--(-100,100); 
\draw [dash pattern={on 5pt off 4pt}]  (150,0)--(150,100); 
\draw [dash pattern={on 5pt off 4pt}]  (100,0)--(100,100); 

\draw (-10,20)..controls (20,0) and (-20,-10)..(0,-30);
\draw (0,20)..controls (30,0) and (-10,-10)..(10,-30);

\draw (-175,-30) node {$(k)$};
\draw (-125,-30) node {$(l)$};
\draw (125,-30) node {$(m)$};

\draw [fill = black] (-200, 0) circle (7);
\draw [fill = black] (-150, 0) circle (7);
\draw [fill = black] (-100, 0) circle (7);
\draw [fill = black] (100, 0) circle (7);
\draw [fill = black] (150, 0) circle (7);
\end{tikzpicture},
\end{figure}
\noindent where the $(i-1)$th, $i$-th, $(i+1)$-th, $j$-th, $(j+1)$-th dashed edges are drawn. Without loss of genericity, we assume that $k<l<m$. We show that $\Lambda_D$ vanishes on (G1), (G2) and (G3). A completely similar proof works for the odd case.

\noindent (G1)
\begin{align*}
&\Lambda_D(U_i^2) \\
= &  \sigma_{k-1}(S_i D + S_i U_i D) \\
= &  \sigma_{k-1}(S_i D - S_i D) \\
= & 0. \\
\end{align*}

\noindent (G2)
\begin{align*}
&\Lambda_D(U_i U_j - U_j U_i) \\
 = & \sigma_{k-1}(S_j D + S_i U_j D - S_i D - S_j U_i D) \\
 = & \sigma_{k-1}(S_j D - S_j U_i D-S_i D+ S_i U_j D ). \\
\end{align*}
By STU relation in $\mathcal{I}_{k-1}$, 
\[
S_j D - S_j U_i D = D_s= S_i D- S_i U_j D
\]
for the graph $D_s$ in Figure \ref{property2}, which has $k-2$ external vertices. 
Since $ \sigma_{k-1}$ satisfies ($\Sigma 4$) by the induction hypothesis, we have $\Lambda_D(U_i U_j - U_j U_i) = 0$.

\begin{figure}[htpb]
\begin{center}
\includegraphics[width = 10cm]{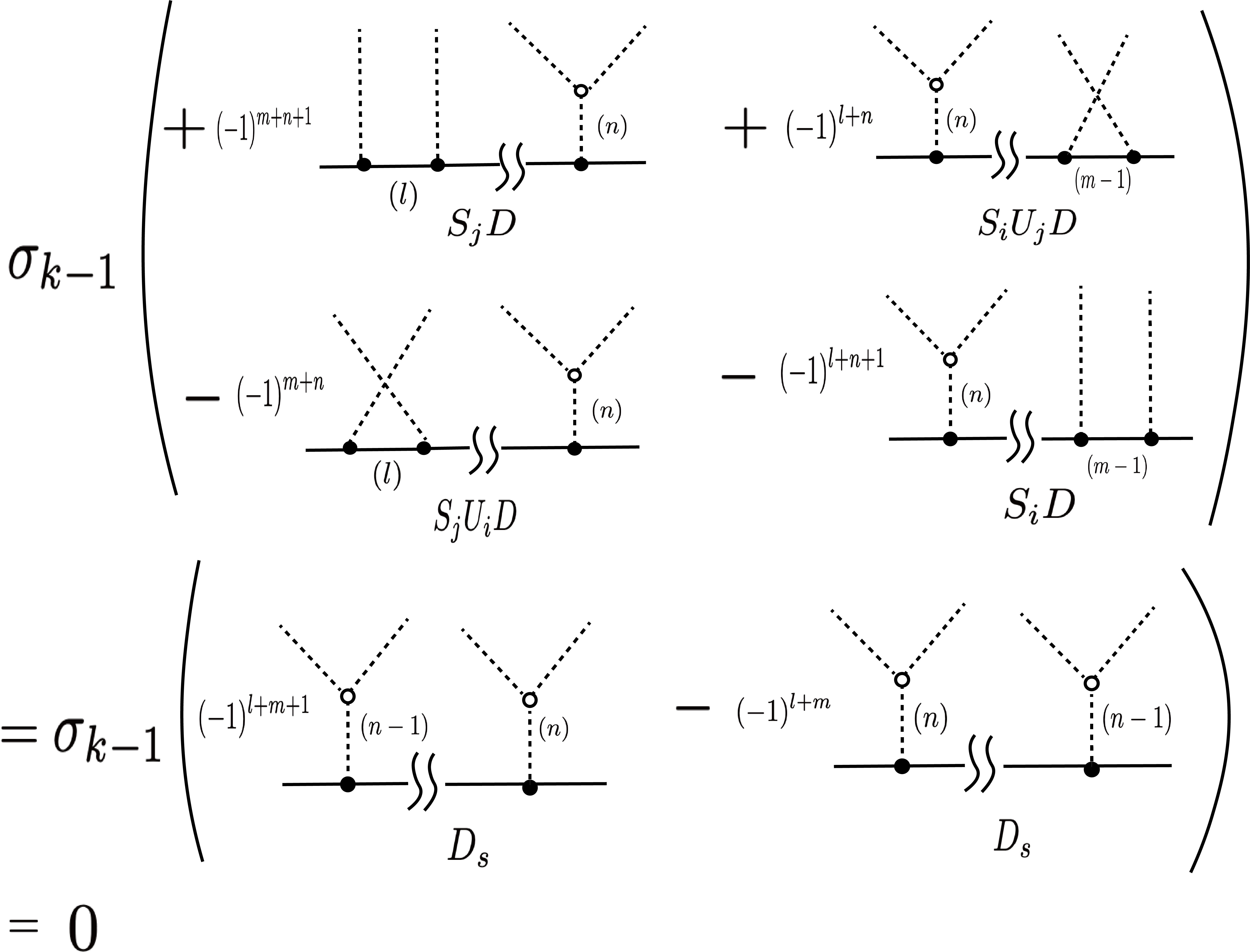}
\caption{Graph $D_s$ in (G2) (even case) }
\label{property2}
\end{center}
\end{figure}

\noindent (G3) 
\begin{align*}
&\Lambda_D(U_i U_{i-1} U_i - U_{i-1} U_i U_{i-1})\\
= & \sigma_{k-1}(S_i D + S_{i-1} U_i D + S_i U_{i-1} U_i D- S_{i-1} D- S_i U_{i-1} D - S_{i-1} U_i U_{i-1} D) \\
= & \sigma_{k-1}(S_i D - S_{i-1} U_i U_{i-1} D + S_{i-1} U_i D - S_i U_{i-1} D  + S_i U_{i-1} U_i D - S_{i-1} D) \\
= & \sigma_{k-1} (D_I + D_H + D_X). \\
\end{align*}
Where $D_I$, $D_H$ and $D_X$ are graphs in Figure \ref{property3} and $D_I + D_H + D_X$  satisfies an IHX relation. We used STU relation in $\mathcal{I}_{k-1}$ in the last equation, 
Since $ \sigma_{k-1}$ satisfies ($\Sigma 3$) by the induction hypothesis, we have $ \Lambda_D(U_i U_{i-1} U_i - U_{i-1} U_i U_{i-1}) = 0$.

\begin{figure}[htpb]
\begin{center}
\includegraphics[width = 10cm]{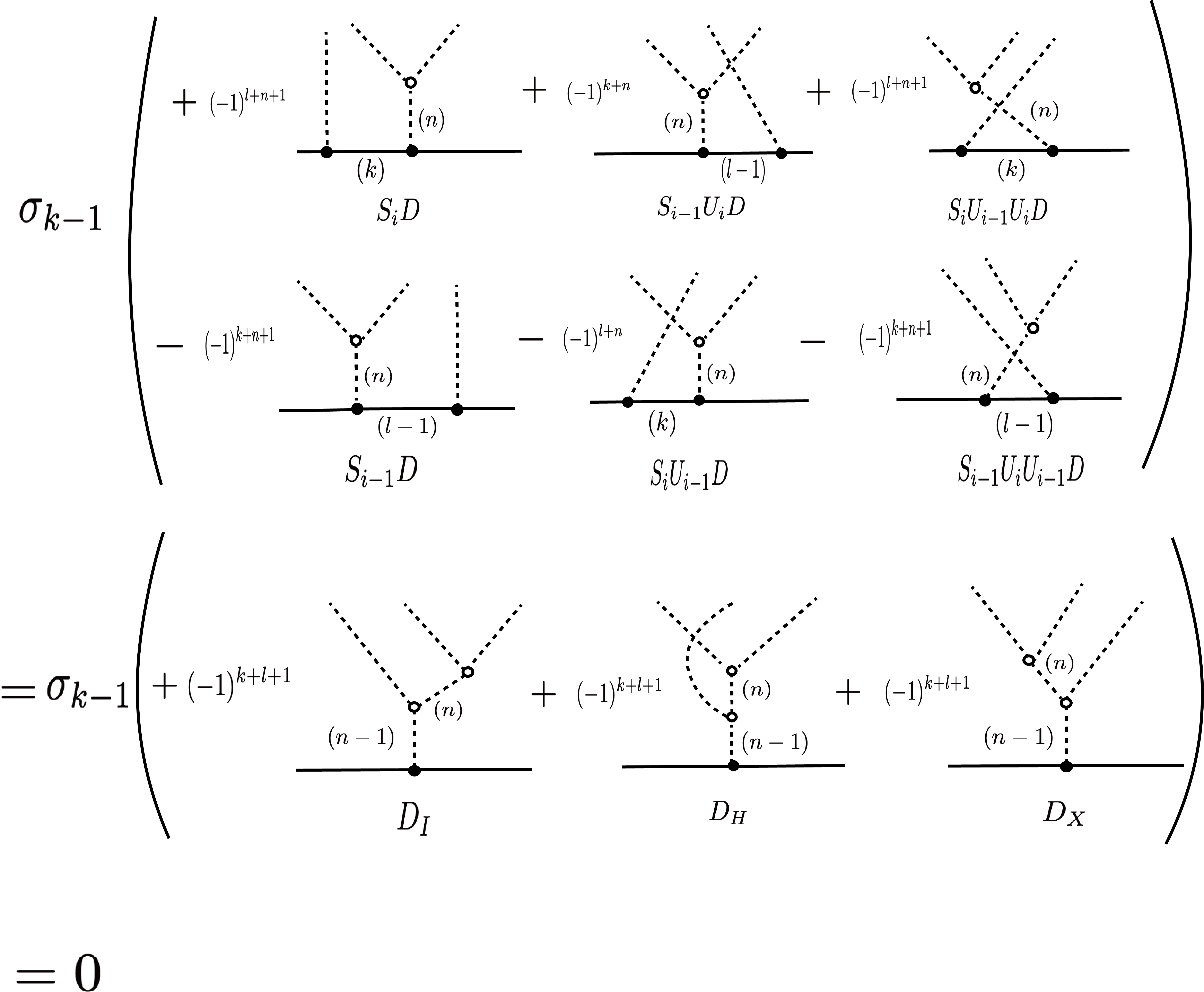}
\caption{Graphs $D_I, D_H, D_X$ in (G3)  (even case)}
\label{property3}
\end{center}
\end{figure}

Next we prove that $\sigma_k$ satisfies ($\Sigma1$) ($\Sigma2$) ($\Sigma3$)  ($\Sigma4$).

\noindent ($\Sigma1$) 
For a hairy graph $C$, 
\begin{align*}
\sigma_k \circ \chi_k (C) = \sigma_k (C) = C.
\end{align*}

\noindent ($\Sigma2$) 
If the orientation of $D$ is reversed (resp. preserved), the orientations of $U_i D$ and $S_i D$ are also reversed (resp. preserved). Hence, the orientation of $\Gamma_D (\pi)$ is reversed (resp. preserved). By the induction hypothesis, $\sigma_{k-1} $ vanishes on orientation relations. Since $\sigma_k$ is the sum of $\Lambda_D(\pi) = \sigma_{k-1} \Gamma_D( \pi) $ for $\pi \in \mathfrak{S}_k(d)$, $\sigma_k$(orientation relations)$= 0$.
  
\noindent ($\Sigma3$) 
Let $D_I + D_H + D_X $ be an IHX relation. Then $\Gamma_{D_I}(\pi) + \Gamma_{D_H} (\pi)+ \Gamma_{D_X}(\pi)$ also satisfies an IHX relation, because the procedure does not change the part IHX occurs.  Since $ \sigma_{k-1}$ satisfies ($\Sigma 3$) by the induction hypothesis,
\begin{align*}
&\Lambda_{D_I}(\pi)+ \Lambda_{D_H}(\pi) + \Lambda_{D_X}(\pi)\\
 = &\sigma_{k-1}(\Gamma_{D_I} (\pi)+ \Gamma_{D_H} (\pi) + \Gamma_{D_X}(\pi))\\
 = &0.
 \end{align*}
 
\noindent ($\Sigma4$) 
Let  $S_i D= D - U_i D $ be an STU relation. Then we have
\begin{align*}
& \sigma_k(D - U_i D ) \\
= & \frac{1}{|\mathfrak{S}_k(D)|} \sum_{\pi \in \mathfrak{S}_k(D)} (\Lambda_D(\pi) -  \Lambda_{U_iD} (\pi)) \\
= & \frac{1}{|\mathfrak{S}_k(D)|} \sum_{\pi \in \mathfrak{S}_k(D)} (\Lambda_D(\pi) -  \left(\Lambda_{D} (\pi U_i) - \Lambda_{D} (U_i)\right))\\
= & \Lambda_{D} (U_i) =  \sigma_{k-1} \Gamma_{D} (U_i) = \sigma_{k-1} (S_i D) = \sigma_k (S_i D). \\
\end{align*}

\end{proof}

 \subsection{$\iota: \mathcal{A}^c _{BCR} \rightarrow \mathcal{A}_{BCR}$ is an isomorphism}

\begin{proof}[Proof of Theorem \ref{Main Result 2}]
We construct the inverse $\kappa_{\ast}$ (resp. $\overline{\kappa}_{\ast}$) of the map $\iota_{\ast}$ (resp. $\overline{\iota}_{\ast}$).
Namely, we construct the map $\kappa: \mathcal{D} \rightarrow \mathcal{A}^c_{BCR}$  (resp. $\kappa: \mathcal{D} \rightarrow \overline{\mathcal{A}}^c_{BCR}$) which vanishes on STU, IHX and orientation relations (and chord relations). Note that by Lemma \ref{IHX and STU} below, an IHX relation can be written a sum of several STU relations. 
Let $D$ be a (colored) BCR graph. By using STU relations repeatedly, we can transform the graph $D$ to a linear combination $H =\sum_k a_k D_k$, where each $D_k$ is a BCR chord diagram. We call this operation resolution of $D$. We set $\kappa(D)$ as the equivalence class of $H$ in $\mathcal{A}^c_{BCR}$. By construction, the map $\kappa$, if it is well-defined, induces $\kappa_{\ast}: \mathcal{A}_{BCR} \rightarrow \mathcal{A}^c_{BCR}$. We can easily check $\kappa_{\ast} \iota_{\ast} = id$ and $\iota_{\ast} \kappa_{\ast}= id$. 

However, we must show that the resulting $H = H(D)$ does not depend, up to $4T$ relations, on choices of procedures of STU relations. We show this by induction on the number of internal vertices $\#W(D)$. 

The case  $\#W(D) = 0$ is trivial because no STU relation is performed. Let $\#W(D) = 1$, and let $w$ be the unique internal vertex. Since $w$ has three adjacent dashed edges $e_1$, $e_2$ and $e_3$, there are three choices of STU relations. Each choice produces a sum of two (or one) chord diagrams. We can easily see that the differences between these combinations are exactly $4T$ relations.

Next, we assume that the case $\#W(D) \leq i$ is proved and show the case $\#W(D) = i+1$ $(i\geq1)$.
Let an edge $e_1$ of the graph $D$ connect an external vertex $v_1$ and an internal vertex $w_1$. Let another edge $e_2$ of $D$ connect an external vertex $v_2$ and an internal vertex $w_2$. 
First, we assume $w_1 \neq w_2$.
Consider the two procedures to perform STU relations. One performs the first STU relation to $e_1$ and gives a sum of graphs $D_{1t} +D_{1u}$, while another performs the first STU relation to $e_2$ and gives $D_{2t} + D_{2u}$. Note that after the first choice of STU relation, the resulting combination of chord diagrams does not depend, up to $4T$ relations, on the remaining choices of STU relations. Then we can assume that the second STU relation of $D_{1t} +D_{1u}$ is performed to the edge $e_2$ (we write this procedure as $e_1 \rightarrow e_2$), while the second STU relation of $D_{2t} + D_{2u}$ is performed to the edge $e_1$ (we write this as $e_2 \rightarrow e_1$). But the two procedures yield the same combination of graphs.

So the remaining case is the case $w = w_1 = w_2$ and $w$ is connected to another internal vertex. If there is an edge $e_3$ that connects external vertex $v_3$ and an internal vertex $w_3 \neq w$, the procedure $(e_1 \rightarrow e_3)$ is equal to $(e_3 \rightarrow e_1) = (e_3 \rightarrow e_2) = (e_2 \rightarrow e_3)$. So let $w$ be connected to a block $B$ which consists of only internal vertices (and dashed edges). We see in Lemma \ref{Slidingargument} that in both cases ($e_1$ first and  $e_2$ first), the result of STU relations to $B$ after the STU relation to $e_i$ vanishes by $4T$ relation. 

This ends the proof of well-definedness of $\kappa$. Finally, we show that the map $\kappa$ sends chord relations to chord relations. Let $\sum_i D_i (= 0)$ be a chord relation such that $D_1$ has an internal vertex $w_1$ connected by an edge $e_1$ to an external vertex $v_1$. Consider performing the STU relation to $e_1$ and transform $D_1 = D_{1s}$ to $D_{1t} + D_{1u}$. Since graphs $D_i$ are related by chord relations, there are corresponding edges $e_i = (v_i, w_i)$ and corresponding STU relations $D_i = D_{is} = D_{it} + D_{iu}$ for each $D_i$. We can observe that $\sum_i D_{it}$ and $\sum_i D_{iu}$ are chord relations.

\end{proof}

\subsection{IHX in terms of STU}

\begin{lemma}
\label{IHX and STU}
Recall that  $\mathcal{D}$ is the space generated by BCR graphs with at least one external vertex. 
Then an IHX relation in $\mathcal{D}$ can be written as a sum of STU relations. 
\end{lemma}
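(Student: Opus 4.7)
The plan is to adapt Bar-Natan's classical derivation (from his work on open Jacobi diagrams) of IHX from STU for graphs with at least one external leg to the present BCR setting. Fix an IHX relation $\Sigma := D_I - D_H + D_X$ applied at a dashed edge $e = (w_1, w_2)$ between two internal vertices. Because the ambient graph is connected, every internal vertex is trivalent in dashed edges, and at least one external vertex is present, there exists a dashed-edge path from $\{w_1, w_2\}$ to some internal vertex that is dashed-adjacent to an external vertex; let $d \geq 0$ denote the minimum length of such a path.

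I would proceed by induction on $d$. In the base case $d=0$, one of $w_1, w_2$, say $w_1$, is dashed-adjacent to an external vertex $v^\ast$ via an edge $e_0$. Applying the STU relation at $v^\ast$ along $e_0$ inside each of $D_I, D_H, D_X$ absorbs $w_1$ into the skeleton and produces three ``resolved differences'', totalling six BCR graphs with one fewer internal vertex. A direct diagrammatic comparison should show that these six graphs reorganize into three further STU identities whose signed combination is $\Sigma$ itself, yielding $\Sigma \equiv 0$ modulo STU.

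For the inductive step $d \geq 1$, let $e_1 = (w_1, u)$ be the first dashed edge of a shortest path from $w_1$ to the external part, with $u$ internal. Any IHX relation applied at an edge incident to $u$ has distance at most $d-1$ to the external vertex and is therefore STU-expressible by the induction hypothesis. Using such auxiliary IHX identities around $u$, I would rewrite $\Sigma$ as a sum of (i) manifest STU relations and (ii) an IHX relation at an edge at distance $d-1$ from the external vertex; the latter vanishes modulo STU by the inductive hypothesis, completing the step.

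The hardest part I foresee is the explicit bookkeeping in the base case: one must verify that the six STU-resolutions of $D_I, D_H, D_X$ pair up into three STU identities with the correct signs in both the odd and even colouring conventions of Definition \ref{defcolororientation}, and that no intermediate graph violates the BCR valence constraints or produces solid loops, double solid edges, or other forbidden configurations. A secondary concern is that the ``sliding'' used in the inductive step stays inside the BCR class at each intermediate step, which requires care when $u$ sits near solid components at nearby external vertices.
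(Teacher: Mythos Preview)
Your base case is the same computation the paper performs: once $w_1$ is dashed-adjacent to an external vertex $v$, the paper applies STU first at the edge $f=(w_1,v)$ and then at the edge $e$ (which now joins $w_2$ to an external vertex), obtaining four (or two) chord-type graphs from each of $D_I,D_H,D_X$; the resulting twelve (or six) graphs cancel in pairs. Your phrasing ``six graphs reorganize into three further STU identities'' is slightly off in bookkeeping---after one STU you still have $w_2$ internal, and one more round of STU is needed to see the cancellation---but the underlying check is identical to the paper's.

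The real divergence is in your inductive step. You propose to reduce $d$ by invoking IHX relations at edges incident to $u$ (which lie at distance $\le d-1$ and are STU-expressible by induction) in order to ``slide'' the IHX edge closer to the external part. But you never specify which IHX identities these are or how they combine to yield an IHX at distance $d-1$; an IHX move at $e_1=(w_1,u)$ applied to $D_I$ produces graphs whose internal structure no longer matches $D_H$ and $D_X$, so the residual combination is not obviously a single IHX relation at a shorter edge. This step would require a nontrivial ``associator''-type identity among IHX relations that you have not supplied.

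The paper sidesteps this entirely. Rather than inducting on $d$ via auxiliary IHX moves, it observes that the three graphs $D_I,D_H,D_X$ agree outside the IHX part, so one can apply STU \emph{simultaneously} to all three at any internal vertex $z\neq w_1,w_2$ adjacent to an external vertex (such a $z$ exists by connectedness whenever $d>0$). This writes $\Sigma$ as a sum of STU relations plus two IHX relations in graphs with strictly fewer internal vertices; iterating brings $w_1$ adjacent to an external vertex. In effect the paper inducts on the number of internal vertices using only STU, avoiding the delicate IHX-sliding you propose. I would recommend replacing your inductive step with this simultaneous-STU reduction; your base case then goes through as in the paper, and your worries about forbidden configurations do not arise since simultaneous STU acts away from the IHX part.
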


\begin{proof}
Let a graph $D$ have an ``$I$'' part, namely two internal vertices $w_1$ and $w_2$ connected by a dashed edge $e$. Consider the IHX relation performed on the edge $e$. 
We have three graphs $D = D_1$, $D_2$ and $D_3$. We want to show $D_1 + D_2 + D_3 = 0$ by only using STU relations. 

Using STU relation simultaneously to  $D_1$, $ D_2$ and  $D_3$, we can assume that $w_1$ is connected to an external vertex $v$ by some dashed edge $f$. 
Then perform the STU relation to $f$ and $e$ in this order. 
This yields four or two graphs for each $D_i$, depending on the number of solid edges the external vertex $v$  has. We can easily see that these twelve (or six) graphs are canceled. See Figure \ref{IHXSTU}.

\begin{figure}[htpb]
\begin{center}
\includegraphics[width = 9cm]{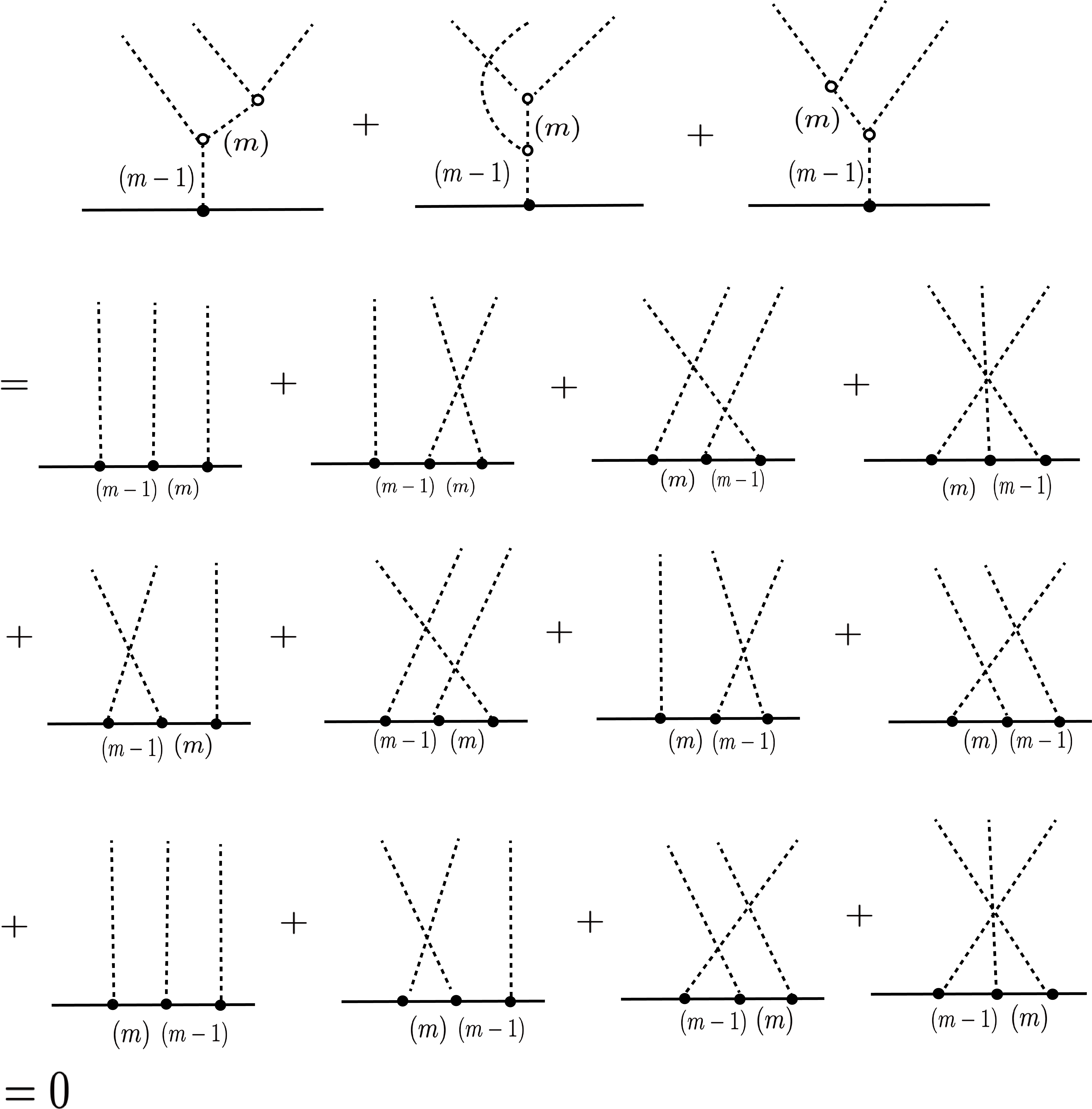}
\caption{IHX is a sum of STU relations}
\label{IHXSTU}
\end{center}
\end{figure}

\end{proof}

\subsection{Sliding of chords}

\begin{lemma}[Sliding of chords]
\label{Slidingargument}
Let $D$ be a BCR chord diagram. 
Let $w_1$ be an external vertex and let $w_1$ have one dashed edge $(w_0, w_1)$ that is connected to an external vertex $w_0$. 
Let $v_1, v_2, \dots, v_{2n}$ be other external vertices. Assume that these vertices satisfy the following. See Figure \ref{slidingargument}.
\begin{itemize}
\item $w_1$ is connected by one solid edge $e$ to $v_1$
\item $v_1, v_2, \dots, v_{2n}$ are connected by $n$ chords among them.
\item $v_i$ and $v_{i+1}$ are connected by one solid edge.
\item $v_i$ has no other solid edge except for at most one solid edge connected to $v_{2n}$. We write this exceptional edge as $f$.
\end{itemize}

\begin{figure}[htpb]
\begin{center}
\includegraphics[width = 9cm]{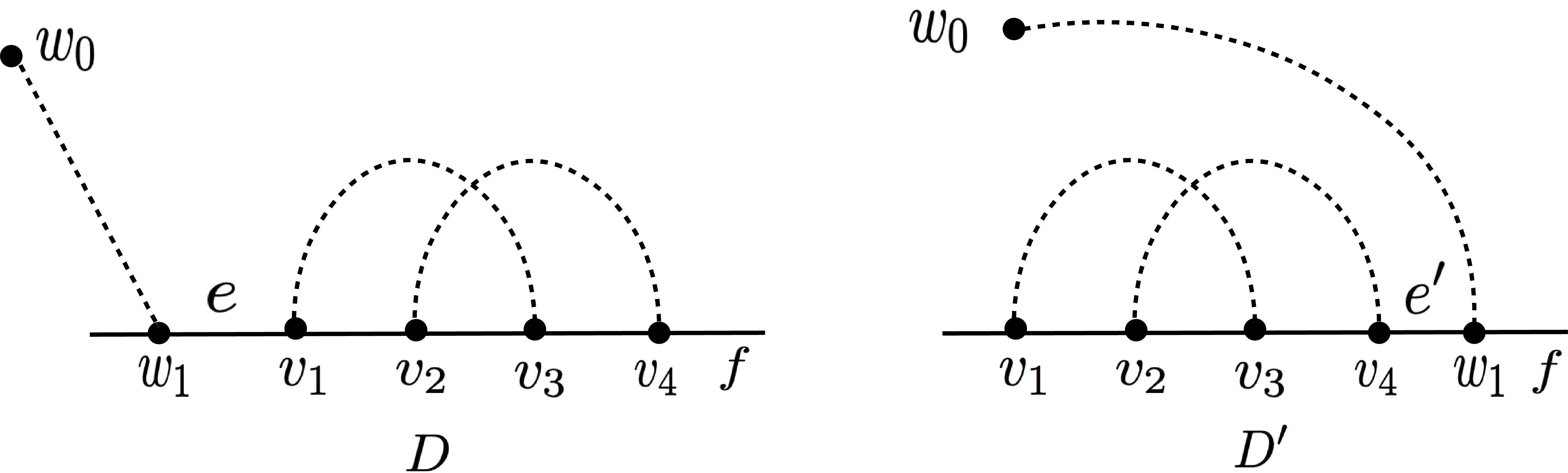}
\caption{Example of sliding argument}
\label{slidingargument}
\end{center}
\end{figure}

We construct another chord diagram $D^{\prime}$ from $D$ by remove the solid edge $e = (v_1, w_1)$ and inserting a new solid edge $e^{\prime} = (v_{2n}, w_1)$ to $f$. 

Then by using $4T$ relation,  $D$ and $D^{\prime}$ are equivalent. 
\end{lemma}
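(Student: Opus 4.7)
The plan is to prove Lemma~\ref{Slidingargument} by induction on $n$, sliding the solid-edge endpoint $w_1$ one vertex at a time along the broken line $v_1, v_2, \ldots, v_{2n}$, and showing that the accumulated error from each elementary slide cancels because every chord in the segment is ``internal''.

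For the base case $n = 1$, there are two external vertices $v_1, v_2$ joined both by a chord and by a solid edge, and $w_1$ is solid-attached to $v_1$. I expect the identity $D = D'$ to follow from a single instance of the $4T$ relation (Figure~\ref{4Trelation}): the chord $(w_0, w_1)$ together with the chord $(v_1, v_2)$ produces exactly the four-term configuration equated by $4T$, so swapping $w_1$ from being adjacent to $v_1$ to being adjacent to $v_2$ is the content of that relation.

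For the inductive step I would telescope. Define intermediate chord diagrams $D^{(i)}$ for $i = 0, 1, \ldots, 2n$ by inserting $w_1$ into the broken line between $v_i$ and $v_{i+1}$, with $D^{(0)} = D$ and $D^{(2n)} = D'$. Then
\[
D - D' \;=\; \sum_{i=1}^{2n}\bigl(D^{(i-1)} - D^{(i)}\bigr),
\]
and each summand represents sliding $w_1$ past the single neighboring vertex $v_i$. I expect the $4T$ relation (in its variant (I), (II) or (III) of Figure~\ref{4Trelation}, according to how many further solid edges $v_i$ has) to rewrite $D^{(i-1)} - D^{(i)}$ as an ``error term'' $E_i$: a linear combination of chord diagrams in which the chord $(w_0, w_1)$ and the chord incident to $v_i$ have been locally fused.

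The main obstacle will be to show $\sum_i E_i = 0$ in $\mathcal{A}^c_{BCR}$. Here the ``internal'' hypothesis enters essentially: each of the $n$ chords has both endpoints in $\{v_1, \ldots, v_{2n}\}$, so each chord $c = (v_i, v_j)$ contributes precisely two error terms $E_i$ and $E_j$ (one for each endpoint of $c$ that $w_1$ slides past). These two terms describe the same fused local picture viewed from opposite sides of $c$, and should cancel once the orientation conventions of Definition~\ref{defcolororientation} are properly accounted for. Verifying this pairwise cancellation, case by case according to the three variants of $4T$, is the technical heart of the proof; I would model the detailed bookkeeping on Bar-Natan's proof of the analogous sliding lemma for Jacobi diagrams on $S^1$ in~\cite{Bar}.
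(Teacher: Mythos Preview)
Your approach is essentially the paper's proof, reorganized. The paper writes down the $4n$ diagrams obtained by inserting $w_1$ on either side of each endpoint $v_i$ of each of the $n$ chords, observes that the sum of the $n$ corresponding $4T$ relations is zero, and then notes that each of the $2n-1$ interior positions $D^{(1)},\dots,D^{(2n-1)}$ occurs twice with opposite sign, leaving $D \pm D' = 0$. This is exactly your telescoping sum $\sum_i (D^{(i-1)}-D^{(i)})$, regrouped by chord rather than by position.

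Two small corrections. First, the induction on $n$ is superfluous: once you have the telescoping identity and the pairing by chords, the argument works uniformly for every $n$; nothing is gained by peeling off one chord at a time. Second, your description of the error terms $E_i$ as diagrams in which the two chords are ``locally fused'' is misleading. A fused diagram would carry a trivalent internal vertex and hence live in $\mathcal{A}_{BCR}$, not $\mathcal{A}^c_{BCR}$; since this lemma is invoked in the proof of Theorem~\ref{Main Result 2} to show that $\kappa$ is well-defined, passing through $\mathcal{A}_{BCR}$ would be circular. The argument should (and can) stay entirely among chord diagrams: for each chord $c=(v_a,v_b)$ the single $4T$ relation attached to $c$ is precisely the identity $(D^{(a-1)}-D^{(a)}) + (D^{(b-1)}-D^{(b)}) = 0$, so no intermediate ``error'' object is needed at all.
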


\begin{proof}
4T relation can be written as $D_1 + D_2 + D_3 + D_4 = 0$, where $D_i (i = 1,2,3,4)$ are four graphs obtained by four ways to connect an end $w_1$ of dashed edge to the sides of some fixed chord. 
Consider the sum of $4n$ graphs obtained by all the ways to connect $w_1$ to the sides of the $n$ chords. By $4T$ relation, this sum is equal to $0$. On the other hand, observe that in the sum there are $2n-1$ pairs that consist of the two isomorphic graphs with the opposite sign. (See Figure \ref{slidingargument2}.)The remaining graphs are $D$ and $D^{\prime}$.

\begin{figure}[htpb]
\begin{center}
\includegraphics[width = 9cm]{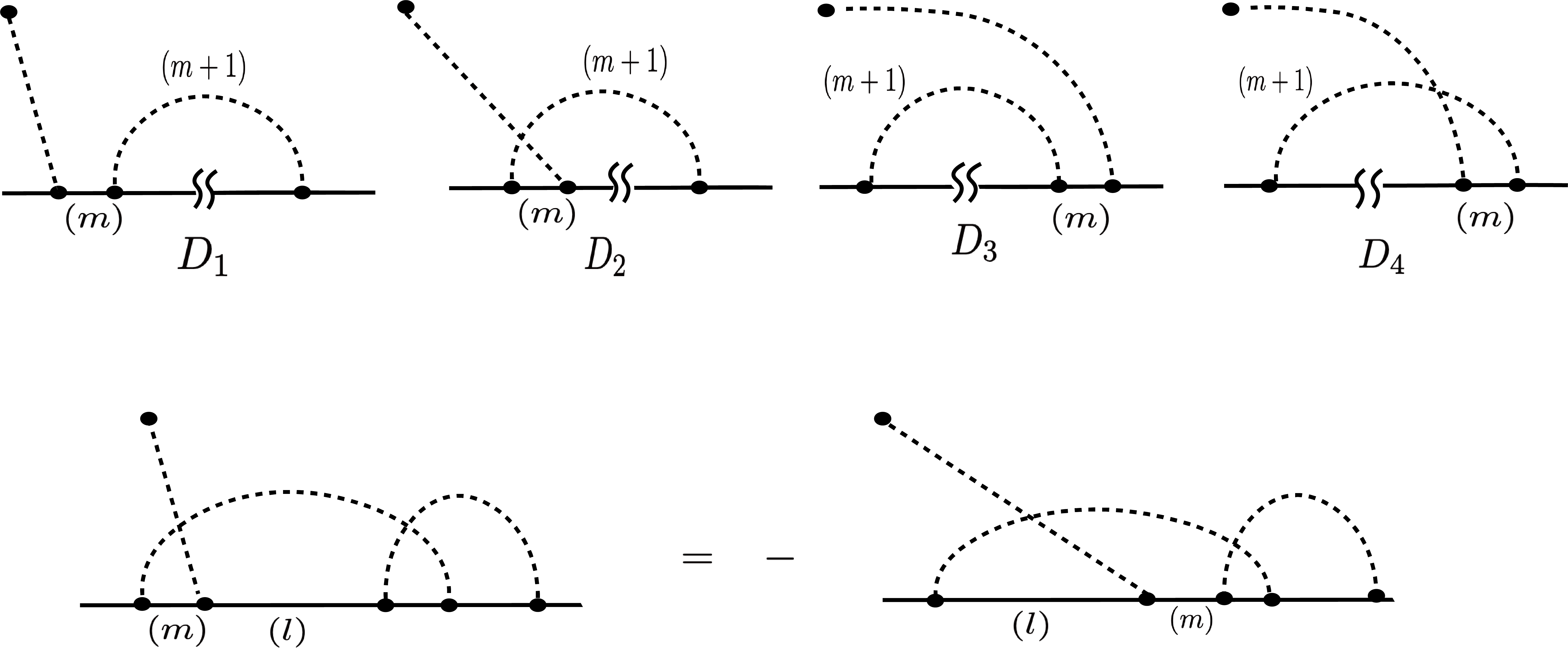}
\caption{Four graphs $D_1$, $D_2$,  $D_3$, $D_4$ (above). A pair of isomorphic graphs with the opposite sign (below).}
\label{slidingargument2}
\end{center}
\end{figure}

\end{proof}

\newpage

\end{document}